\documentclass[11pt,reqno]{amsart}

\usepackage[T1]{fontenc}
\usepackage{lmodern}
\usepackage{amsmath,amssymb,amsthm,mathtools,mathrsfs,bm}
\usepackage{xcolor}
\usepackage[protrusion=true,expansion=false]{microtype}
\usepackage{enumitem}
\usepackage[colorlinks=true,linkcolor=blue!45!black,citecolor=blue!45!black,urlcolor=blue!45!black]{hyperref}
\usepackage[nameinlink,capitalise,noabbrev]{cleveref}

\allowdisplaybreaks
\numberwithin{equation}{section}
\setlist[itemize]{leftmargin=2em,itemsep=2pt,topsep=4pt}
\setlist[enumerate]{leftmargin=2.4em,itemsep=2pt,topsep=4pt}

\newtheorem{theorem}{Theorem}[section]
\newtheorem{proposition}[theorem]{Proposition}
\newtheorem{lemma}[theorem]{Lemma}
\newtheorem{corollary}[theorem]{Corollary}

\theoremstyle{definition}

\newtheorem{remark}[theorem]{Remark}

\newcommand{\N}{\mathbb N}
\newcommand{\R}{\mathbb R}
\newcommand{\C}{\mathbb C}
\newcommand{\K}{\mathbb K}
\newcommand{\E}{\mathbb E}
\newcommand{\Pp}{\mathbb P}
\newcommand{\Om}{\Omega_n}
\newcommand{\muN}{\mu_n}
\newcommand{\eps}{\varepsilon}
\newcommand{\1}{\mathbf 1}
\newcommand{\wh}{\widehat}
\newcommand{\cX}{\mathfrak X}
\newcommand{\cM}{\mathfrak M}
\newcommand{\betaP}{\beta_p}
\newcommand{\ellTwo}{\ell_2^n}
\newcommand{\norm}[1]{\left\lVert #1\right\rVert}
\newcommand{\abs}[1]{\left\lvert #1\right\rvert}
\newcommand{\ip}[2]{\left\langle #1,#2\right\rangle}
\newcommand{\asympU}{\asymp}

\title[Sharp metric $X_p$ inequalities]{Sharp $p/\log p$ Bounds for Metric $X_p$ Inequalities and Arbitrary Rademacher Chaos}

\author{Yutong Zhang}
\address{School of Mathematics, Sichuan University,
24 First Loop Road South Section I,
Chengdu 610064, Sichuan, China}
\email{yutongzhang@stu.scu.edu.cn}
\thanks{Corresponding author: Yutong Zhang}

\author{Yaoran Yang}
\address{School of Mathematics, Sichuan University,
24 First Loop Road South Section I,
Chengdu 610064, Sichuan, China}
\email{yangyaoran@stu.scu.edu.cn}

\subjclass[2020]{Primary 46B09, 42C10; Secondary 60E15, 51F30}
\keywords{metric $X_p$ inequality, Hamming cube, Rademacher chaos, Rosenthal inequality, sampling without replacement, discrete Riesz potential}

\begin{document}

\begin{abstract}
Naor proved that $L_p$ satisfies the metric $X_p$ inequality with the sharp scaling parameter and asked for the optimal dependence of its constant on $p$. The best previously known upper bound was $O(p^4/\log p)$, while first-degree chaos gives a lower bound of order $p/\log p$. We prove the matching upper bound. More generally, if $h$ is a mean-zero scalar function on the Hamming cube, $p\geq2$, and $S$ is uniformly distributed over the $k$-subsets of $[n]$, then
\[
 \bigl(\E_S\|\E_{[n]\setminus S}h\|_p^p\bigr)^{1/p}
 \lesssim \frac p{\log p}
 \left[\frac{k}{n}\sum_{j=1}^n\|\partial_jh\|_p^p+
 \left(\frac{k}{n}\right)^{p/2}\|h\|_p^p\right]^{1/p}.
\]
The proof combines a sharp fixed-cardinality Rosenthal comparison with the exact reconstruction $h=\sum_jD_j\Delta^{-1}h$. A dimension-free inverse-gradient square-function estimate follows by integrating a pointwise reverse Poincar\'e inequality for the Walsh heat semigroup. Naor's transference argument then yields the optimal metric bound $O(p/\log p)$.
\end{abstract}

\maketitle

\section{Introduction and main results}\label{sec:introduction}

For $p\geq2$, the metric $X_p$ inequality of Naor and Schechtman is a nonlinear counterpart of a sharp moment inequality for randomly restricted Rademacher sums. Naor proved that $L_p$ satisfies the metric inequality with the optimal scaling parameter, but the resulting constant was of order $p^4/\log p$; first-degree chaos forces a lower bound of order $p/\log p$ \cite{NaorSchechtman2016,Naor2016}. We prove that Walsh chaos of arbitrary degree entails no additional loss in the dependence on $p$.

Write $[n]=\{1,\ldots,n\}$ and $\Om=\{-1,1\}^n$, and let $\muN$ be the uniform probability measure on $\Om$. All expectations over finite sets are normalized. For $S\subseteq[n]$, let $P_S=\E_{[n]\setminus S}$ denote conditional expectation over the coordinates outside $S$. If $\eps^{(j)}$ is obtained from $\eps$ by changing the sign of its $j$th coordinate, set $D_jh(\eps)=\bigl(h(\eps)-h(\eps^{(j)})\bigr)/2$ and $\partial_jh=2D_jh$. We write $L_p^0(\Om;\K)=\{h\in L_p(\Om;\K):\E h=0\}$, put $\betaP=p/\log p$, and use natural logarithms throughout.

Our first result is the arbitrary-chaos estimate.

\begin{theorem}[Sharp $X_p$ inequality for arbitrary Walsh chaos]\label{thm:main-chaos}
There is a universal constant $C\in(0,\infty)$ such that, for every $p\in[2,\infty)$, $n\in\N$, $k\in[n]$, $\K\in\{\R,\C\}$, and $h\in L_p^0(\Om;\K)$,
\begin{align}
 \left(\binom nk^{-1}\sum_{\substack{S\subseteq[n]\\|S|=k}}
 \norm{P_Sh}_{L_p(\Om)}^p\right)^{1/p}
 &\leq C\betaP\Bigg[
 \frac{k}{n}\sum_{j=1}^n\norm{\partial_jh}_{L_p(\Om)}^p\notag\\
 &\hspace{31mm}+\left(\frac{k}{n}\right)^{p/2}
 \norm{h}_{L_p(\Om)}^p\Bigg]^{1/p}.
\label{eq:main-chaos}
\end{align}
The order $p/\log p$ is optimal up to universal multiplicative constants.
\end{theorem}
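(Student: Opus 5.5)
The plan is to follow the route indicated in the abstract: write $h$ as a sum of martingale-like pieces indexed by coordinates, reduce the left-hand side of \eqref{eq:main-chaos} to a Rosenthal-type moment inequality for sums over a uniformly random $k$-subset, and then control the resulting square function dimension-free.

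\textbf{Step 1: reconstruction.} Let $\Delta=\sum_j D_j$ be the (positive) Walsh Laplacian, so $\Delta w_A=|A|w_A$. It is invertible on $L_p^0$, and $h=\sum_{j=1}^n D_j\Delta^{-1}h$ because $\sum_j D_j\Delta^{-1}w_A=w_A$. Set $g_j=D_j\Delta^{-1}h$. Since $D_j$ commutes with $P_S$, and $P_S$ kills every Walsh function not supported in $S$, we get $P_S g_j=0$ for $j\notin S$. Hence
\[
P_Sh=\sum_{j\in S}P_S g_j=P_S\Big(\sum_{j\in S}g_j\Big),
\qquad
\|P_Sh\|_p\le\Big\|\sum_{j\in S}g_j\Big\|_p .
\]
So it suffices to bound $\E_S\|\sum_{j\in S}g_j\|_p^p$.

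\textbf{Step 2: fixed-cardinality Rosenthal comparison.} For fixed $\eps$, the sum $\sum_{j\in S}g_j(\eps)$ is a sampling-without-replacement sum. Write it as $\sum_{j}\delta_j g_j$ with $\delta=\1_S$, and note $\E\delta_j=k/n$. By Hoeffding's comparison of sampling without and with replacement, via convex domination, and by Rosenthal's inequality with the sharp constant $p/\log p$ (Johnson--Schechtman--Zinn, for independent selectors after centering), I expect
\[
\E_S\Big|\sum_{j\in S}a_j\Big|^p\lesssim \beta_p^p\Big[\frac kn\sum_j|a_j|^p+\Big(\frac kn\sum_j|a_j|^2\Big)^{p/2}\Big]+\Big|\frac kn\sum_j a_j\Big|^p .
\]
The mean term is $(k/n)^p|\sum_j g_j|^p=(k/n)^p|h|^p$, which is dominated by $(k/n)^{p/2}|h|^p$. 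Integrating in $\eps$, the first term is handled by $\|g_j\|_p=\|\Delta^{-1}D_jh\|_p\le\|D_jh\|_p$, using that $\Delta^{-1}$ restricted to functions of the form $D_jh$ (all of whose Walsh supports contain $j$) is an $L_p$ contraction, being $\int_0^\infty e^{-t\Delta}dt$ with $\|e^{-t\Delta}D_jh\|_p\le e^{-t}\|D_jh\|_p$. The $e^{-t}$ factor comes from writing $e^{-t\Delta}$ on such functions as $e^{-t}$ times a contraction in the $j$-th coordinate.

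\textbf{Step 3: the square-function term (the main obstacle).} It remains to show, with a constant independent of $p$ (or at most $O(\beta_p)$), that
\[
\Big\|\Big(\sum_j|D_j\Delta^{-1}h|^2\Big)^{1/2}\Big\|_p\lesssim\|h\|_p .
\]
A Riesz-transform bound via Lust-Piquard would cost $p^{3/2}$, so I would instead follow the abstract's hint. Write $\Delta^{-1}=\int_0^\infty e^{-t\Delta}\,dt$. Then prove a pointwise reverse Poincar\'e inequality for the Walsh heat semigroup, of the form $\sum_j|D_je^{-t\Delta}h|^2\lesssim \frac{1}{t}\,e^{-ct}\,e^{-t\Delta}|h|^2$-type (or the analogue with $(e^{-t\Delta}|h|^q)^{2/q}$). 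This should follow from the coordinatewise product structure: $D_je^{-t\Delta}h=\tanh$-type factors times $e^{-t\Delta}$ applied to $\eps_j h$-components, with the decay coming from the kernel in coordinate $j$. Minkowski's inequality in $t$ then gives
\[
\Big\|\Big(\sum_j|g_j|^2\Big)^{1/2}\Big\|_p\le\int_0^\infty\Big\|\Big(\sum_j|D_je^{-t\Delta}h|^2\Big)^{1/2}\Big\|_p\,dt ,
\]
and the integrand needs to be integrable, roughly $\min(t^{-1/2},e^{-t})\|h\|_p$. Mean zero handles large $t$, and the $t^{-1/2}$ singularity at $0$ is integrable.

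Combining Steps 1--3 gives \eqref{eq:main-chaos} with constant $C\beta_p$. Optimality is already witnessed by first-degree chaos $h=\sum_j a_j\eps_j$, as stated in the introduction, via the known sharpness of Rosenthal's $p/\log p$.
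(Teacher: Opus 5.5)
Your Steps 1 and 2 and the coordinate contraction follow the paper's argument: the reconstruction $h=\sum_jD_j\Delta^{-1}h$ with $P_Sh=P_S(\sum_{j\in S}g_j)$, Hoeffding's without/with-replacement comparison followed by sharp Rosenthal, and $e^{-t\Delta}D_jh=e^{-t}\,T^{(j^c)}_{e^{-t}}D_jh$. In that last identity the residual contraction is the noise operator on the coordinates \emph{other} than $j$. One caution in Step 2: apply Rosenthal to the i.i.d.\ draws $a_{J_r}-a_{\mathrm{av}}$ that Hoeffding's theorem produces, not to independent Bernoulli selectors. For $n=2$, $k=1$, $a=(1,-1)$, the fixed-size sum has $\E|X|^p=1$ while the Bernoulli$(1/2)$-selector sum has $\E|X|^p=1/2$, so no convex-order comparison with independent selectors is available.

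The genuine gap is Step 3. You identify it as the main obstacle but stop at ``this should follow from the coordinatewise product structure,'' and that mechanism does not give a dimension-free bound. Using only the kernel factor in coordinate $j$ (Cauchy--Schwarz) gives $|D_jT_\rho h(\eps)|^2\le\frac{\rho^2}{1-\rho^2}T_\rho(|h|^2)(\eps)$ for each $j$ separately, with $\rho=e^{-t}$. Summing over $j$ then loses a factor $n$.

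The missing idea is orthogonality across $j$. Write $T_\rho h(\eps)=\E_\xi h(\eps\xi)$ with independent signs satisfying $\E\xi_j=\rho$. Differentiate the product kernel in coordinate $j$ and divide by the kernel. Your ``$\tanh$-type'' factor is then $\rho\xi_j/(1+\rho\xi_j)=\frac{\rho}{\sqrt{1-\rho^2}}\zeta_j$, where $\zeta_j=(\xi_j-\rho)/\sqrt{1-\rho^2}$. Hence
\[
 D_jT_\rho h(\eps)=\frac{\rho}{\sqrt{1-\rho^2}}\,\E_\xi\bigl[\zeta_jh(\eps\xi)\bigr].
\]
The family $1,\zeta_1,\ldots,\zeta_n$ is orthonormal in $L_2$ of the biased product measure. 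Bessel's inequality for $\xi\mapsto h(\eps\xi)$ therefore gives $\sum_j|D_jT_\rho h(\eps)|^2\le\frac{\rho^2}{1-\rho^2}T_\rho(|h|^2)(\eps)$, with no dependence on $n$. This is your conjectured form, since $\frac{\rho^2}{1-\rho^2}=(e^{2t}-1)^{-1}\le e^{-t}/(2t)$.

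To finish, use the $L_{p/2}$-contractivity of $T_\rho$ and
\[
 \int_0^\infty(e^{2t}-1)^{-1/2}\,dt=\int_0^1(1-\rho^2)^{-1/2}\,d\rho=\frac{\pi}{2}.
\]
Minkowski in $t$ then yields the square-function bound with constant $\pi/2$. With this step supplied, your argument is the paper's proof.
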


The argument gives the following more precise estimate.

\begin{theorem}[Refined arbitrary-chaos estimate]\label{thm:refined-chaos}
There is a universal constant $C\in(0,\infty)$ with the following property. Let $p\in[2,\infty)$, $n\in\N$, $k\in[n]$, $t=k/n$, and let $S$ be uniform among the $k$-subsets of $[n]$. Then every $h\in L_p^0(\Om;\K)$, where $\K\in\{\R,\C\}$, satisfies
\begin{equation}
 \left(\E_S\norm{P_Sh}_p^p\right)^{1/p}
 \leq t\norm{h}_p+C\betaP
 \left(t\sum_{j=1}^n\norm{D_jh}_p^p\right)^{1/p}
 +\frac{C\pi}{2}\betaP\sqrt t\,\norm{h}_p.
\label{eq:refined-chaos}
\end{equation}
Moreover, if $(M,\nu)$ is a measure space and $h:\Om\to L_p(M,\nu)$ satisfies $\E_\eps h(\eps)=0$ in $L_p(M,\nu)$, then the same estimate holds with the corresponding Bochner norms, over either scalar field.
\end{theorem}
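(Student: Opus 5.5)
The approach follows the strategy announced in the abstract: reconstruct $h$ from its inverse-Laplacian gradients, then apply a sharp Rosenthal-type comparison to the random restriction $P_S$. Here $\Delta=\sum_j D_j$ is invertible on mean-zero functions, and we write $g_j=D_j\Delta^{-1}h$. Since $D_j$ commutes with $\Delta$, we have $\sum_j g_j=\Delta\Delta^{-1}h=h$, so
\[
P_Sh=\sum_{j=1}^n P_S g_j .
\]
For $j\notin S$ the coordinate $j$ is averaged out. Because $D_j$ is odd in coordinate $j$, this gives $P_S D_j=0$ whenever $j\notin S$. Hence
\[
P_Sh=\sum_{j\in S}P_S g_j=P_S\Big(\sum_{j\in S}g_j\Big).
\]
Conditional expectations are contractions, so $\norm{P_Sh}_p\le\norm{\sum_{j\in S}g_j}_p$. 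The problem is thus reduced to a sampling-without-replacement sum $\sum_j \1_{j\in S}\,g_j$ of fixed vectors $g_j\in L_p(\Om)$, or in the Bochner case $L_p(\Om;L_p(M))\cong L_p(\Om\times M)$.

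The next step is to write $\1_{j\in S}=t+(\1_{j\in S}-t)$. The $t$-part contributes exactly $t\sum_j g_j=th$, which produces the term $t\norm{h}_p$. For the centered part I would use the sharp fixed-cardinality Rosenthal comparison, which I expect the paper establishes before this theorem. It would be proved by comparing with independent Bernoulli$(t)$ selectors (Hoeffding-type convex domination for sampling without replacement) and then applying the Johnson–Schechtman–Zinn Rosenthal inequality with constant $\betaP$. Pointwise in $(\eps,x)$ it gives
\[
\Big(\E_S\Big|\sum_j(\1_{j\in S}-t)g_j\Big|^p\Big)^{1/p}\lesssim \betaP\Big[\Big(t\sum_j|g_j|^p\Big)^{1/p}+\sqrt t\Big(\sum_j|g_j|^2\Big)^{1/2}\Big].
\]
Integrating over $\Om\times M$ turns this into the norm bound
\[
\Big(\E_S\Big\|\sum_j(\1_{j\in S}-t)g_j\Big\|_p^p\Big)^{1/p}\lesssim \betaP\Big[\Big(t\sum_j\norm{g_j}_p^p\Big)^{1/p}+\sqrt t\,\Big\|\Big(\sum_j|g_j|^2\Big)^{1/2}\Big\|_p\Big].
\]
The square function appears inside the $L_p$ norm, which is the feature that will make the final estimate dimension-free.

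Two estimates on the $g_j$ remain, and these are the main obstacle. First, we need $\norm{D_j\Delta^{-1}h}_p\le\norm{D_jh}_p$. This follows from $D_j\Delta^{-1}h=\Delta^{-1}D_jh=\int_0^\infty e^{-s\Delta}D_jh\,ds$. Since $D_jh$ has no constant part, $\Delta\ge1$ on the relevant Walsh modes. Using only $e^{-s\Delta}$-contractivity on $L_p$ is not enough, since it gives no integrable decay; the fix is $D_j=D_j^2$. Write $e^{-s\Delta}=e^{-sD_j}e^{-s\Delta_{\hat j}}$, where $\Delta_{\hat j}=\sum_{i\neq j}D_i$. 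Then $e^{-sD_j}D_j=e^{-s}D_j$, because $D_j$ is a projection, while $e^{-s\Delta_{\hat j}}$ is a Markov contraction. This gives the bound with constant $\int_0^\infty e^{-s}\,ds=1$.

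Second, we need the inverse-gradient square-function estimate
\[
\Big\|\Big(\sum_j|D_j\Delta^{-1}h|^2\Big)^{1/2}\Big\|_p\le\frac{\pi}{2}\norm{h}_p .
\]
Following the abstract, I would prove a pointwise reverse Poincaré inequality for the Walsh heat semigroup:
\[
\Big(\sum_j|D_je^{-s\Delta}f|^2\Big)^{1/2}\le c(s)\,\big(e^{-s\Delta}|f|^2\big)^{1/2}.
\]
Each $D_j$ acts on a two-point space, so $c(s)$ can be computed from the one-dimensional kernel, something like $c(s)\sim(e^{2s}-1)^{-1/2}$. Then write $D_j\Delta^{-1}h=\int_0^\infty D_je^{-s\Delta}h\,ds$ and apply Minkowski's inequality in $\ell_2$ to the $s$-integral. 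For large $s$, use $e^{-s\Delta}h=e^{-(s/2)\Delta}e^{-(s/2)\Delta}h$ together with spectral-gap decay. Finally apply $\norm{(e^{-s\Delta}|h|^2)^{1/2}}_p\le\norm{h}_p$, valid for $p\ge2$ by Jensen. Heuristically, $\int_0^\infty(e^{2s}-1)^{-1/2}\,ds=\pi/2$, which matches the constant in \eqref{eq:refined-chaos} and supports this choice of semigroup splitting.

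The delicate point is making the pointwise inequality strong enough to integrate with constant exactly $\pi/2$, independent of $n$ and $p$. In the vector-valued case this needs the same argument with $|\cdot|$ taken pointwise in $M$, since every step above is pointwise and hence Banach-lattice friendly. Combining the three pieces gives \eqref{eq:refined-chaos}.
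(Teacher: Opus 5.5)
Your outline follows the paper's architecture: $g_j=D_j\Delta^{-1}h$, the factorization $P_Sh=P_S(\sum_{j\in S}g_j)$, a pointwise fixed-cardinality Rosenthal bound integrated over $\Om\times M$, the coordinate contraction via $D_j^2=D_j$ (the paper's $\int_0^1T_\rho^{(j^c)}D_jh\,d\rho$ after $\rho=e^{-s}$), and integration of a reverse Poincar\'e bound with $c(s)=(e^{2s}-1)^{-1/2}=\rho/\sqrt{1-\rho^2}$, giving $\pi/2$.

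The genuine gap is the one step that makes the estimate dimension-free. The pointwise inequality
\[
 \sum_{j=1}^n\abs{D_je^{-s\Delta}h(\eps)}^2\leq\frac{1}{e^{2s}-1}\,e^{-s\Delta}\bigl(|h|^2\bigr)(\eps)
\]
is asserted rather than proved, and the route you indicate (reading $c(s)$ off the one-dimensional kernel) does not give it. A one-coordinate computation followed by Cauchy--Schwarz yields this bound only for each $j$ separately. Summing over $j$ then costs a factor $n$, i.e.\ a factor $\sqrt n$ in the square function, and $C$ is no longer universal.

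The missing idea is orthogonality across coordinates. Take $\rho=e^{-s}$, independent signs $\xi_j$ of mean $\rho$, and $\zeta_j=(\xi_j-\rho)/\sqrt{1-\rho^2}$. Then $T_\rho D_jh(\eps)=\frac{\rho}{\sqrt{1-\rho^2}}\E_\xi[\zeta_jh(\eps\xi)]$; this identity is indeed a one-coordinate computation. Since $1,\zeta_1,\ldots,\zeta_n$ are orthonormal in $L_2$ of the biased product measure, Bessel's inequality bounds all $n$ coefficients of $\xi\mapsto h(\eps\xi)$ at once by $\E_\xi\abs{h(\eps\xi)}^2=T_\rho(|h|^2)(\eps)$. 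Once this is available, your spectral-gap splitting for large $s$ is unnecessary, because $c(s)$ is already integrable on $(0,\infty)$.

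There is also an error in your sketch of the sampling lemma: a uniform $k$-subset sum is \emph{not} dominated in convex order by the independent Bernoulli$(t)$-selector sum. Take $n=2$, $k=1$, $a=(1,-1)$. Then $\sum_{j\in S}a_j=\pm1$ always, whereas $\sum_j(\theta_j-\tfrac12)a_j=\theta_1-\theta_2$, with $\theta_j$ independent Bernoulli$(1/2)$, vanishes with probability $1/2$. The $p$th moments are $1$ and $1/2$.

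Hoeffding's theorem instead compares sampling without replacement with sampling \emph{with} replacement, namely $\sum_{r=1}^ka_{J_r}$ with $J_1,\ldots,J_k$ i.i.d.\ uniform on $[n]$. Put $a_{\mathrm{av}}=n^{-1}\sum_ja_j$. Apply Hoeffding to the convex function $u\mapsto\abs{u-t\sum_ja_j}^p$. Then apply Rosenthal's inequality to the i.i.d.\ centered summands $a_{J_r}-a_{\mathrm{av}}$, which satisfy $k\E\abs{a_{J_1}-a_{\mathrm{av}}}^p\leq2^p\,t\sum_j\abs{a_j}^p$ and $k\E\abs{a_{J_1}-a_{\mathrm{av}}}^2\leq t\sum_j\abs{a_j}^2$. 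This gives exactly the centered bound you wrote, and it is how the paper proves its fixed-cardinality lemma.
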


For the metric statement, let $G=\mathbb Z_{8m}^n$, where $\mathbb Z_{8m}=\mathbb Z/(8m\mathbb Z)$, let $e_1,\ldots,e_n$ be its coordinate vectors, and write $\eps_S=\sum_{j\in S}\eps_je_j$.

\begin{theorem}[Sharp metric $X_p$ inequality for $L_p$]\label{thm:main-metric}
There is a universal constant $C\in(0,\infty)$ with the following property. Let $p\in[2,\infty)$, $n\in\N$, $k\in[n]$, and $m\in\N$ satisfy $m\geq\sqrt{n/k}$. For every measure space $(M,\nu)$ and every mapping $f:\mathbb Z_{8m}^n\to L_p(M,\nu)$, over either the real or the complex scalar field,
\begin{align}
&\left(
 \binom nk^{-1}\sum_{\substack{S\subseteq[n]\\|S|=k}}
 \E_{x,\eps}\norm{f(x+4m\eps_S)-f(x)}_{L_p(M)}^p
 \right)^{1/p}\notag\\
&\quad\leq C\frac p{\log p}\,m\Bigg[
 \frac{k}{n}\sum_{j=1}^n\E_x\norm{f(x+e_j)-f(x)}_{L_p(M)}^p\notag\\
&\hspace{49mm}+\left(\frac{k}{n}\right)^{p/2}
 \E_{x,\eps}\norm{f(x+\eps)-f(x)}_{L_p(M)}^p
 \Bigg]^{1/p}.
\label{eq:main-metric}
\end{align}
The order $p/\log p$ cannot be improved up to universal factors.
\end{theorem}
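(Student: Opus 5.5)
We deduce \cref{thm:main-metric} from the vector-valued case of \cref{thm:refined-chaos} through Naor's transference argument \cite{Naor2016}. The idea is to replace $f$ by a smoothed, periodic function on the Hamming cube, apply the chaos estimate there, and then push the resulting terms back to $\mathbb Z_{8m}^n$.

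\textbf{Step 1: build a Hamming-cube function.} Fix $x\in\mathbb Z_{8m}^n$. Following Naor, we use an averaging (smoothing) of $f$ along the lattice to define, for each $x$, a map $h_x:\Om\to L_p(M,\nu)$. The construction should have three properties:
\begin{enumerate}
\item the increment $f(x+4m\eps_S)-f(x)$ is controlled, after averaging over $x$, by $P_S$ applied to a suitable mean-zero $h_x$;
\item the derivative $D_jh_x$ is an average of the discrete differences $f(y+e_j)-f(y)$ along the $j$th direction over $O(m)$ steps;
\item $\norm{h_x}_p$ is controlled by $m$ times averages of the diagonal increments $\norm{f(y+\eps)-f(y)}_p$.
\end{enumerate}
Concretely, $h_x(\eps)$ is an average over $s\in\{0,\dots,4m\}^n$-type paths of $f(x+\sum_j s_j\eps_je_j)$. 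Its conditional expectations $P_S$ then realize the path from $x$ to $x+4m\eps_S$, with the coordinates outside $S$ averaged out. Subtracting the mean makes $h_x$ mean-zero in $L_p(M,\nu)$.

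\textbf{Step 2: apply \cref{thm:refined-chaos} pointwise in $x$.} For each fixed $x$ we get
\[
 \bigl(\E_S\norm{P_Sh_x}_p^p\bigr)^{1/p}\le t\norm{h_x}_p+C\betaP\Bigl(t\sum_j\norm{D_jh_x}_p^p\Bigr)^{1/p}+C'\betaP\sqrt t\,\norm{h_x}_p .
\]
Raising to the $p$-th power and averaging over $x$ keeps the constants, since $(a+b+c)^p\le 3^{p-1}(a^p+b^p+c^p)$ costs only a factor $3$ after taking $p$-th roots.

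\textbf{Step 3: translate the terms back.} We handle each term separately.
\begin{itemize}
\item For the gradient term, use convexity (Jensen over the $O(m)$ steps) together with translation invariance in $x$. This gives $\E_x\norm{D_jh_x}_p^p\lesssim m^p\,\E_x\norm{f(x+e_j)-f(x)}_p^p$.
\item For the $\norm{h_x}_p$ term, compare it with diagonal increments over $O(m)$ steps. Again by Jensen this gives $m^p\E_{x,\eps}\norm{f(x+\eps)-f(x)}_p^p$.
\item The condition $m\ge\sqrt{n/k}$ ensures that $t\le\sqrt t$, so the leading $t\norm{h}_p$ term is absorbed into the $\sqrt t$ term.
\item The error between $f(x+4m\eps_S)-f(x)$ and $P_Sh_x$ comes from the smoothing of the coordinates outside $S$. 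This error is again a diagonal/averaged increment bounded by $m\sqrt t$-type quantities, as in Naor's proof.
\end{itemize}

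\textbf{Step 4: lower bound.} Optimality follows by taking $f$ linear-like on first-degree chaos (for example $f(x)=\sum_j e^{2\pi i x_j/8m}g_j$ with $g_j$ independent in $L_p$). This reduces \eqref{eq:main-metric} to the known scalar Rosenthal-type lower bound $p/\log p$ for sampling without replacement, as in \cref{thm:main-chaos}.

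The main obstacle is Step 3, specifically the error term. We must verify that Naor's transference loses only factors of $m$ and universal constants, with no extra powers of $p$. The earlier $p^4$ loss presumably came partly from the chaos estimate and partly from steps like this. The first thing to check is that every comparison used is a Jensen/convexity step or a triangle inequality on $p$-th roots, rather than a Khintchine or Kahane step that would introduce a $\sqrt p$ factor.
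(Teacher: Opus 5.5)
There is a genuine gap, and it is in Step~1. You leave the cube function unspecified, yet its construction is the heart of the transference, and the design you describe does not work. That design is a single cube function per $x$ that carries the whole $O(m)$-scale path, so that $\|D_jh_x\|_p$ and $\|h_x\|_p$ each pick up a factor $m$.

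First, a literal reading fails. Since $4m\eps_j\equiv 4m\pmod{8m}$, the point $x+4m\eps_S=x+4m\1_S$ does not depend on $\eps$. Hence any increment of the form $Ff(x+4m\eps_S)-Ff(x)$ is constant in $\eps$, while $\E_\eps P_Sh_x=0$. So $P_Sh_x$ cannot literally ``realize the path from $x$ to $x+4m\eps_S$''.

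The coherent version of your properties (1)--(3) is $h_x(\eps)=f(x+2m\eps)-f(x-2m\eps)$. Then $P_Sh_x(\eps)=\widetilde Tf(x+2m\eps_S)-\widetilde Tf(x-2m\eps_S)$ with $\widetilde Tf(z)=\E_\eta f(z+2m\eta_{S^c})$. Indeed $\E_x\|D_jh_x\|_p^p\le(4m)^p\E_x\|f(x+e_j)-f(x)\|_p^p$ and $\E_x\|h_x\|_p^p\le(4m)^pB(f)$, where $B(f)=\E_{x,\eps}\|f(x+\eps)-f(x)\|_p^p$.

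The problem is that the coordinates in $S^c$ are now averaged at scale $2m$. The unsmoothing error $\|f-\widetilde Tf\|_p$ is then of order $mB(f)^{1/p}$, which cannot be absorbed into $\betaP m\sqrt t\,B(f)^{1/p}$ when $t$ is small. Concretely, take $p=2$ and $f(x)=\sum_je^{2\pi i x_j/8m}w_j$ with orthonormal $w_j$. Then $\widetilde Tf=\sum_{j\in S}e^{2\pi ix_j/8m}w_j$, because $\E_\eta e^{i\pi\eta_j/2}=0$. The error is therefore $\sqrt{n-k}$, whereas the right-hand side of the theorem is $O(\sqrt k)$. Your claim that the error is ``bounded by $m\sqrt t$-type quantities, as in Naor's proof'' is exactly the unproved step, and at this scale it is false.

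The missing idea is to keep the smoothing at bounded scale and to produce the factor $m$ by telescoping, not inside the cube function.
\begin{enumerate}
\item The paper smooths by $T_{S^c}f(x)=\E_\delta f(x+2\delta_{S^c})$, which costs only $\|f-T_{S^c}f\|_p\le2B(f)^{1/p}$.
\item It uses the scale-$2$ odd function $h_x(\eps)=f(x+2\eps)-f(x-2\eps)$, for which $T_{S^c}f(x+2\eps_S)-T_{S^c}f(x-2\eps_S)=P_Sh_x(\eps)$ holds exactly.
\item The smoothed long jump is split into $m$ jumps of length $4\eps_S$. After the $\eps$-dependent translation $y=x+2(2r-1)\eps_S$, which is harmless in $L_p(x,\eps)$, each jump is $P_Sh_y$. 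Minkowski then gives the bound $m\|P_Sh_x\|_{L_p(x,\eps)}$.
\item The chaos estimate is applied to this $h_x$, for which $\partial_jh_x$ and $h_x$ are bounded on average by $8$ and $4$ times the unit and full-sign increments, with no $m$ at all.
\item The total unsmoothing error $4B(f)^{1/p}$ is absorbed using $m\sqrt t\ge1$ and $\betaP\ge1$.
\end{enumerate}
Step (5) is the only place the hypothesis $m\ge\sqrt{n/k}$ is used. Your Step~3 assigns it the wrong role: $t\le\sqrt t$ holds for every $t\in(0,1]$ regardless of $m$. Your Step~4 is fine in spirit, since degree-one examples together with the sharp Rosenthal/Johnson--Schechtman--Zinn lower bound give order $p/\log p$. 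The paper appeals to Naor's Remark~5 for this.
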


We now define the optimal constants without reference to unspecified constants in the preceding statements. For fixed $p\geq2$, let $\cX_p$ be the least $K\in[0,\infty]$ such that, for every $n\in\N$, $k\in[n]$, $\K\in\{\R,\C\}$, and $h\in L_p^0(\Om;\K)$,
\begin{equation}
 \left(\binom nk^{-1}\sum_{\substack{S\subseteq[n]\\|S|=k}}\norm{P_Sh}_p^p\right)^{1/p}
 \leq K\left[
 \frac{k}{n}\sum_{j=1}^n\norm{\partial_jh}_p^p+
 \left(\frac{k}{n}\right)^{p/2}\norm h_p^p\right]^{1/p}.
\label{eq:def-X-constant}
\end{equation}
Likewise, let $\cM_p$ be the least $K\in[0,\infty]$ such that, for every $n\in\N$, $k\in[n]$, $m\in\N$ with $m\geq\sqrt{n/k}$, every measure space $(M,\nu)$, either choice of scalar field, and every mapping $f:\mathbb Z_{8m}^n\to L_p(M,\nu)$,
\begin{align}
&\left(
 \binom nk^{-1}\sum_{\substack{S\subseteq[n]\\|S|=k}}
 \E_{x,\eps}\norm{f(x+4m\eps_S)-f(x)}_{L_p(M)}^p
 \right)^{1/p}\notag\\
&\quad\leq Km\Bigg[
 \frac{k}{n}\sum_{j=1}^n\E_x\norm{f(x+e_j)-f(x)}_{L_p(M)}^p\notag\\
&\hspace{46mm}+\left(\frac{k}{n}\right)^{p/2}
 \E_{x,\eps}\norm{f(x+\eps)-f(x)}_{L_p(M)}^p
 \Bigg]^{1/p}.
\label{eq:def-M-constant}
\end{align}

\begin{theorem}\label{thm:constant-asymptotics}
There are universal constants $0<c\leq C<\infty$ such that, for every $p\geq2$,
\[
 c\frac p{\log p}\leq\cX_p\leq C\frac p{\log p},
 \qquad
 c\frac p{\log p}\leq\cM_p\leq C\frac p{\log p}.
\]
\end{theorem}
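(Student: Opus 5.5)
The plan is to derive the four inequalities from Theorems~\ref{thm:main-chaos}, \ref{thm:refined-chaos} and \ref{thm:main-metric}, plus two lower-bound constructions. We take these theorems as given, including their optimality clauses, whose content we make explicit here.

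\emph{Upper bounds.} The bound $\cX_p\le C\betaP$ is immediate from \eqref{eq:main-chaos}, since \eqref{eq:def-X-constant} is the same inequality with $K$ in place of $C\betaP$. Alternatively, one can read it off \eqref{eq:refined-chaos}. Use $D_j=\partial_j/2$ and $t\le\sqrt t$. By concavity of $x\mapsto x^{1/p}$, each term on the right of \eqref{eq:refined-chaos} is at most a constant multiple of $\betaP\bigl[t\sum_j\|\partial_jh\|_p^p+t^{p/2}\|h\|_p^p\bigr]^{1/p}$; here $\betaP\ge c>0$ absorbs the term $t\|h\|_p$. Likewise $\cM_p\le C\betaP$ is exactly \eqref{eq:main-metric}. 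Both definitions quantify over precisely the classes of $n,k,m,f,h$ covered by those theorems, so nothing more is needed.

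\emph{Lower bound for $\cX_p$.} Take first-degree chaos $h(\eps)=\sum_{j=1}^n\eps_j$, which is real and mean zero. Then $P_Sh=\sum_{j\in S}\eps_j$, and $\partial_jh=2\eps_j$, so $\|\partial_jh\|_p=2$. Choose $k$ and $n$ so that the first-degree sum on the left has $L_p$ norm of order $\betaP\sqrt k$, the Poisson regime. Concretely, the classical sharp Rosenthal/Latała estimate gives
\[
\bigl\|\textstyle\sum_{j\le k}\eps_j\bigr\|_p\asymp \sqrt{pk}\quad (k\ge p).
\]
This alone gives only the ratio $\sqrt p$, so we will instead use the Naor--Schechtman example. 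Let $h$ be a sum of $n$ independent sparse variables, or equivalently compare via the Rosenthal comparison. Choose $k/n=t$ small with $k\approx \log p$, so that the left side is governed by the $\log p$-moment behaviour of a sum of few Rademachers drawn from a random set. The precise choice we plan to follow is the one already used in \cite{NaorSchechtman2016} to show the $p/\log p$ necessity in the linear $X_p$ inequality, which is a special case of \eqref{eq:def-X-constant} for degree-one $h$: there $\sum_j\|\partial_jh\|_p^p$ reduces to $\sum|a_j|^p$ and $\|h\|_p$ to the Rademacher-sum norm. The main obstacle is checking that this degree-one specialization matches the classical linear $X_p$ inequality up to constants, and then importing its $p/\log p$ lower bound. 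That bound comes from the sharp Rosenthal constant $p/\log p$ for sums of independent Bernoulli selectors $\delta_j a_j$ with $\Pr(\delta_j=1)=t$. The fixed-cardinality selection is compared to independent selection via standard decoupling (Hoeffding's sampling-without-replacement comparison goes in the right direction for convex functions). Thus $\cX_p\ge c\betaP$.

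\emph{Lower bound for $\cM_p$.} Transfer the linear example to the torus. Given degree-one $h=\sum a_j\eps_j$, define $f(x)=\sum_j a_j\,\phi(x_j)$, where $\phi:\mathbb Z_{8m}\to\C$ is $\phi(y)=e^{2\pi i y/(8m)}$. Then $f(x+4m\eps_S)-f(x)=-2\sum_{j\in S}a_j\phi(x_j)$, using $4m\eps_j\equiv 4m \pmod{8m}$. Moreover $\|f(x+e_j)-f(x)\|\asymp |a_j|/m$, and $f(x+\eps)-f(x)=\sum a_j\phi(x_j)(e^{2\pi i\eps_j/(8m)}-1)$ has norm $\lesssim m^{-1}\|\sum a_j\phi(x_j)\|_p$ by contraction. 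The $\phi(x_j)$ are independent symmetric variables, comparable to Rademachers up to constants. Multiplying by $m$, inequality \eqref{eq:def-M-constant} therefore implies \eqref{eq:def-X-constant} for this $h$ up to universal factors, giving $\cM_p\ge c\,\cX_p^{\mathrm{lin}}\ge c\betaP$. For $L_p(M,\nu)$ valued in scalars this is legitimate, since scalar fields are covered by taking $M$ a point.
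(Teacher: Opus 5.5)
Your upper bounds are the paper's (they are \eqref{eq:main-chaos} and \eqref{eq:main-metric} read against the definitions of $\cX_p$ and $\cM_p$). Reducing the $\cX_p$ lower bound to the degree-one obstruction is also what \Cref{prop:degree-one-lower} does. For $\cM_p$ you take a genuinely different route. The paper cites Naor's Remark~5 for large $p$ and patches bounded $p$ with $f(x)=e^{\pi ix/4}$ on $\mathbb Z_8$. You instead transfer every degree-one example explicitly through $f(x)=\sum_ja_j\phi(x_j)$, with $\phi(y)=e^{2\pi iy/(8m)}$. This is correct. The long jump is exactly $-2\sum_{j\in S}a_j\phi(x_j)$. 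The $\phi(x_j)$ are i.i.d.\ symmetric, so conditional Jensen gives the lower comparison with $\E_{S,\eps}\abs{\sum_{j\in S}a_j\eps_j}^p$, using $\E\abs{\operatorname{Re}\phi(x_j)}\geq c$. The contraction principle, split into real and imaginary parts, gives the upper comparisons on the right, each with a factor $O(1/m)$. The construction works for every $m$, so the constraint $m\geq\sqrt{n/k}$ costs nothing. What you actually prove is $\cM_p\gtrsim\cX_p^{(1)}$ for every $p\geq2$. Both lower bounds then rest on one linear obstruction, and the separate small-$p$ metric example becomes unnecessary. The paper's route is shorter but purely citational.

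The weak point is your account of where the degree-one obstruction comes from; as sketched it would not go through. Rely instead on the fixed-cardinality form of the sharpness, as the paper does via Naor's Eq.~(9) (Johnson--Schechtman--Zinn treat exchangeable summands), or compute directly. There are three concrete problems.
(i) Hoeffding's convex-order comparison says the fixed-cardinality moment is \emph{at most} the with-replacement one. So it transfers upper bounds, not an extremal example, and it concerns sampling with replacement rather than Bernoulli selectors.
(ii) $k\approx\log p$ cannot work. H\"older gives $(\E_{S,\eps}\abs{\sum_{j\in S}a_j\eps_j}^p)^{1/p}\le k^{1-1/p}(t\sum_j\abs{a_j}^p)^{1/p}$, which caps the ratio at $k$.
(iii) The Poisson example must be run at intensity $\lambda=t\cdot\#\{j:a_j\neq0\}\approx1/p$, not $\lambda\approx1$. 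The reason is that the term $\sqrt t\,\norm{\sum_ja_j\eps_j}_p\approx\sqrt{p\lambda}$ carries Khintchine's factor $\sqrt p$; at $\lambda\approx1$ you only get $\sqrt p/\log p$.

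A direct argument: take $h=\sum_{j\le m}\eps_j$ with $m\geq4p$, $t=1/(pm)$, and $n\to\infty$. The bracket in \eqref{eq:def-X-constant} is at most $2^p/p+1$. Now consider the event that $\abs{S\cap[m]}=q:=\lceil p/\log p\rceil$ and these $q$ signs agree. It has probability $\gtrsim(4p)^{-q}/q!$ and forces $\abs{P_Sh}=q$. Hence the left side is $\gtrsim q(4pq)^{-q/p}\gtrsim p/\log p$.
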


The analytic mechanism is the exact decomposition $g_j=D_j\Delta^{-1}h$. It simultaneously satisfies $\sum_jg_j=h$, the restriction factorization $P_Sh=P_S(\sum_{j\in S}g_j)$, the coordinate estimate $\|g_j\|_p\leq\|D_jh\|_p$, and the square-function estimate
\begin{equation}
 \norm{\left(\sum_{j=1}^n\abs{D_j\Delta^{-1}h}^2\right)^{1/2}}_{L_p(\Om)}
 \leq\frac\pi2\norm{h}_{L_p(\Om)}.
\label{eq:square-introduction}
\end{equation}
The last inequality is obtained by integrating a pointwise reverse Poincar\'e estimate for the Walsh heat semigroup. The heat-kernel coefficient identity used in that estimate appears in \cite{IvanisviliVanHandelVolberg2020}; the reverse Poincar\'e formulation is also recorded explicitly in \cite{IvanisviliXieZhang2026}. The additional ingredient in the present argument is therefore not the Bessel step in isolation, but its combination with the exact restriction factorization and the fixed-cardinality Rosenthal comparison.

The paper is organized as follows. \Cref{sec:inverse-gradient} develops the Walsh calculus, the exact reconstruction, and the inverse-gradient estimate. \Cref{sec:sampling-chaos} proves the fixed-cardinality sampling lemma and the arbitrary-chaos estimates. \Cref{sec:metric-transfer} carries out the metric transference. \Cref{sec:optimality} establishes optimality and records two consequences. \Cref{sec:context} compares the argument with the factorization through Riesz transforms and places the result in the existing literature.

\section{Walsh calculus and inverse-gradient estimates}\label{sec:inverse-gradient}

\subsection{Fourier multipliers and the heat semigroup}

For standard background on Walsh analysis on the discrete cube, see \cite{ODonnell2014}. For $A\subseteq[n]$, let $W_A(\eps)=\prod_{j\in A}\eps_j$, with $W_\varnothing\equiv1$. Every scalar function $h:\Om\to\C$ has the Walsh expansion $h=\sum_{A\subseteq[n]}\wh h(A)W_A$, where $\wh h(A)=\E_\eps[h(\eps)W_A(\eps)]$. The coordinate derivatives and the number operator $\Delta=\sum_{j=1}^nD_j$ act diagonally: $D_jW_A=\1_{\{j\in A\}}W_A$ and $\Delta W_A=|A|W_A$. For a mean-zero function, define $\Delta^{-1}h=\sum_{\varnothing\neq A\subseteq[n]}|A|^{-1}\wh h(A)W_A$. The projection $P_S$ is given by $P_Sh=\sum_{A\subseteq S}\wh h(A)W_A$ and is contractive on every $L_q(\Om)$, $1\leq q\leq\infty$.

For $0\leq\rho\leq1$, the Walsh noise semigroup is $T_\rho h=\sum_A\rho^{|A|}\wh h(A)W_A$. Equivalently, if $\xi_1,\ldots,\xi_n$ are independent signs satisfying $\Pp(\xi_j=1)=(1+\rho)/2$ and $\E\xi_j=\rho$, then $T_\rho h(\eps)=\E_\xi h(\eps\xi)$. Thus $T_\rho$ is a positive contraction on every $L_q(\Om)$. We also use the coordinate-deleted noise operator $T_\rho^{(j^c)}$, defined by $T_\rho^{(j^c)}W_A=\rho^{|A\setminus\{j\}|}W_A$; it is again an $L_q$ contraction.

If $\E h=0$, then $\rho^{-1}T_\rho h$, initially defined for $\rho>0$, extends continuously to $\rho=0$, because the Walsh expansion of $h$ has no constant term. Consequently,
\begin{equation}
 \Delta^{-1}h=\int_0^1T_\rho h\,\frac{d\rho}{\rho}.
\label{eq:delta-inverse-integral}
\end{equation}
Indeed, the coefficient of every nonempty $W_A$ on the right is $\int_0^1\rho^{|A|-1}\,d\rho=1/|A|$. Since $D_j$ commutes with $T_\rho$ and every Walsh frequency of $D_jh$ contains $j$,
\begin{equation}
 D_j\Delta^{-1}h
 =\int_0^1T_\rho D_jh\,\frac{d\rho}{\rho}
 =\int_0^1T_\rho^{(j^c)}D_jh\,d\rho.
\label{eq:coordinate-integral}
\end{equation}
Both integrals are finite-dimensional Bochner integrals, with the integrands understood through their continuous extensions at $\rho=0$.

\begin{lemma}[Coordinate contraction]\label{lem:coordinate-contraction}
For every $1\leq p\leq\infty$, every scalar $h:\Om\to\C$, and every $j\in[n]$, one has $\|D_j\Delta^{-1}(h-\E h)\|_p\leq\|D_jh\|_p$.
\end{lemma}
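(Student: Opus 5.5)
The plan is to use the integral representation \eqref{eq:coordinate-integral} together with Minkowski's integral inequality and the contractivity of the coordinate-deleted noise operators. First reduce to the mean-zero case. Since $D_j$ annihilates constants, $D_j(h-\E h)=D_jh$. It therefore suffices to prove $\|D_j\Delta^{-1}g\|_p\leq\|D_jg\|_p$ for $g=h-\E h$, which is mean zero.

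Next, apply \eqref{eq:coordinate-integral} to $g$:
\[
 D_j\Delta^{-1}g=\int_0^1T_\rho^{(j^c)}D_jg\,d\rho .
\]
This holds at the level of Walsh coefficients. On each $W_A$ with $j\in A$, the right-hand side has coefficient $\int_0^1\rho^{|A|-1}\,d\rho=1/|A|$, since $|A\setminus\{j\}|=|A|-1$. Frequencies without $j$ are killed by $D_j$ on both sides.

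Then take $L_p$ norms. Minkowski's inequality for the finite-dimensional Bochner integral gives
\[
 \|D_j\Delta^{-1}g\|_p\leq\int_0^1\|T_\rho^{(j^c)}D_jg\|_p\,d\rho\leq\int_0^1\|D_jg\|_p\,d\rho=\|D_jg\|_p .
\]
The middle step uses that $T_\rho^{(j^c)}$ is an $L_p$ contraction for all $1\leq p\leq\infty$. To see this, write $T_\rho^{(j^c)}f(\eps)=\E_\xi f(\eps\xi)$, where the $\xi_i$ for $i\neq j$ are independent with $\E\xi_i=\rho$ and $\xi_j\equiv1$. This exhibits $T_\rho^{(j^c)}$ as an average of translates $f\mapsto f(\cdot\,\xi)$. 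Each translate is an isometry of $L_p(\Om)$ because multiplication by $\xi$ preserves $\muN$. Jensen's inequality, or Minkowski for $p=\infty$, then gives the contraction.

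No step is a real obstacle. The only point needing care is the behaviour of the integrand near $\rho=0$. With the $T_\rho^{(j^c)}$ form there is no singular $d\rho/\rho$: the integrand is a polynomial in $\rho$ with values in a finite-dimensional space, hence continuous on $[0,1]$. So the Bochner integral and Minkowski's inequality apply directly, including at $p=\infty$.
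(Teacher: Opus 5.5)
Your proof is correct and follows the paper's argument essentially verbatim. You reduce via $D_j(h-\E h)=D_jh$, use the representation \eqref{eq:coordinate-integral} with the coordinate-deleted operators $T_\rho^{(j^c)}$, and conclude by Minkowski's inequality for the Bochner integral together with the $L_p$ contractivity of $T_\rho^{(j^c)}$. Your extra checks fill in details the paper only asserts: the coefficient computation, the probabilistic representation with $\xi_j\equiv1$ that gives contractivity, and the continuity of the integrand at $\rho=0$.
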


\begin{proof}
Since $D_j(h-\E h)=D_jh$, \eqref{eq:coordinate-integral}, the triangle inequality for Bochner integrals, and the contractivity of $T_\rho^{(j^c)}$ give
\[
 \norm{D_j\Delta^{-1}(h-\E h)}_p
 \leq\int_0^1\norm{T_\rho^{(j^c)}D_jh}_p\,d\rho
 \leq\norm{D_jh}_p.
\]
This argument also covers $p=\infty$.
\end{proof}

\subsection{Exact reconstruction and restriction factorization}

Let $h$ be mean zero and set $g_j=D_j\Delta^{-1}h=\sum_{A\ni j}|A|^{-1}\wh h(A)W_A$ for $j\in[n]$.

\begin{proposition}[Reconstruction and restriction factorization]\label{prop:factorization}
For every mean-zero scalar function $h$ and every $S\subseteq[n]$,
\begin{equation}
 \sum_{j=1}^ng_j=h,
 \qquad
 P_Sh=P_S\left(\sum_{j\in S}g_j\right).
\label{eq:factorization}
\end{equation}
Consequently, $\|P_Sh\|_p\leq\|\sum_{j\in S}g_j\|_p$ for every $1\leq p\leq\infty$.
\end{proposition}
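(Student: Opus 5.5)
The plan is to verify both identities coefficientwise in the Walsh basis, since every operator involved ($D_j$, $\Delta^{-1}$, $P_S$) is diagonal there; the norm consequence will then follow from contractivity of $P_S$.

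\emph{First identity.} Expand $g_j=\sum_{A\ni j}|A|^{-1}\wh h(A)W_A$ and sum over $j\in[n]$. For each nonempty $A$, the Walsh coefficient of $\sum_j g_j$ at $A$ is
\[
 \sum_{j\in A}|A|^{-1}\wh h(A)=\wh h(A).
\]
The coefficient at $A=\varnothing$ is $0=\wh h(\varnothing)$, because $h$ has mean zero. Hence $\sum_j g_j=h$. Equivalently, $\sum_jD_j\Delta^{-1}=\Delta\Delta^{-1}$ is the identity on mean-zero functions.

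\emph{Second identity.} Write $h=\sum_{j\in S}g_j+\sum_{j\notin S}g_j$ and apply $P_S$. For $j\notin S$, every frequency $A$ in the expansion of $g_j$ contains $j$, so $A\not\subseteq S$. Since $P_S$ keeps exactly the frequencies $A\subseteq S$, this gives $P_Sg_j=0$, and therefore $P_Sh=P_S(\sum_{j\in S}g_j)$. As a sanity check, for $A\subseteq S$ the coefficient of $\sum_{j\in S}g_j$ at $A$ is $\sum_{j\in A}|A|^{-1}\wh h(A)=\wh h(A)$, which is consistent.

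\emph{Norm bound.} Since $P_S$ is conditional expectation, it is a contraction on every $L_p(\Om)$, $1\leq p\leq\infty$, as noted above. This yields $\|P_Sh\|_p\leq\|\sum_{j\in S}g_j\|_p$.

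No step is a real obstacle. The only point requiring care is the bookkeeping at the empty frequency, and that is exactly where the mean-zero hypothesis enters. The same argument holds verbatim for vector-valued $h$, since all operators act coefficientwise.
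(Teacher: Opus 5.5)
Your proposal is correct and follows essentially the same route as the paper: a coefficientwise Walsh check of $\sum_j g_j=h$, with the mean-zero hypothesis handling $A=\varnothing$, followed by the $L_p$ contractivity of $P_S$. The only cosmetic difference is that you obtain the second identity by noting $P_Sg_j=0$ for $j\notin S$ and invoking the first identity, whereas the paper computes the multiplier $|A\cap S|/|A|$ of $\sum_{j\in S}g_j$ directly and observes that it equals $1$ on every nonempty $A\subseteq S$.
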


\begin{proof}
For every nonempty $A\subseteq[n]$, the coefficient of $W_A$ in $\sum_jg_j$ is $|A|^{-1}\sum_{j=1}^n\1_{\{j\in A\}}\wh h(A)=\wh h(A)$. Since $\wh h(\varnothing)=0$, this proves the first identity. Moreover, $\sum_{j\in S}g_j=\sum_{\varnothing\neq A\subseteq[n]}\frac{|A\cap S|}{|A|}\wh h(A)W_A$. Applying $P_S$ removes every term with $A\nsubseteq S$. For each surviving nonempty $A\subseteq S$, the multiplier $|A\cap S|/|A|$ equals $1$, so the result is $P_Sh$. The norm estimate follows from the $L_p$ contractivity of $P_S$.
\end{proof}

\subsection{Reverse Poincar\'e and the inverse-gradient square function}

Fix $0<\rho<1$ and let $\xi_1,\ldots,\xi_n$ have the biased-sign law used above. Define $\zeta_j=(\xi_j-\rho)/\sqrt{1-\rho^2}$. Independence gives $\E\zeta_j=0$, $\E\zeta_j^2=1$, and $\E(\zeta_i\zeta_j)=0$ for $i\neq j$; hence $1,\zeta_1,\ldots,\zeta_n$ is an orthonormal family in the biased product $L_2$ space.

\begin{lemma}[Biased coefficient identity]\label{lem:biased-coefficient}
For every $h:\Om\to\C$, $j\in[n]$, $\eps\in\Om$, and $0<\rho<1$,
\begin{equation}
 T_\rho D_jh(\eps)
 =\frac{\rho}{\sqrt{1-\rho^2}}\,
 \E_\xi\bigl[\zeta_jh(\eps\xi)\bigr].
\label{eq:biased-coefficient}
\end{equation}
Both sides extend continuously to zero at $\rho=0$.
\end{lemma}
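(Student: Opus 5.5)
By linearity in $h$, it suffices to verify the identity \eqref{eq:biased-coefficient} on a single Walsh function $h=W_A$. Both sides are linear in $h$, and $T_\rho$, $D_j$ and the expectation $\E_\xi$ all commute with finite linear combinations. For $h=W_A$ the left side equals $\1_{\{j\in A\}}\rho^{|A|}W_A(\eps)$, since $D_jW_A=\1_{\{j\in A\}}W_A$ and $T_\rho W_A=\rho^{|A|}W_A$.

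For the right side, use multiplicativity: $W_A(\eps\xi)=W_A(\eps)\prod_{i\in A}\xi_i$. This gives
\[
 \E_\xi\bigl[\zeta_jW_A(\eps\xi)\bigr]
 =W_A(\eps)\,\E_\xi\Bigl[\zeta_j\prod_{i\in A}\xi_i\Bigr].
\]
By independence the expectation factors over coordinates.

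If $j\notin A$, the product contains the factor $\E\zeta_j=0$, so the right side vanishes and agrees with the left side.

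If $j\in A$, the expectation equals $\E[\zeta_j\xi_j]\prod_{i\in A\setminus\{j\}}\E\xi_i=\E[\zeta_j\xi_j]\,\rho^{|A|-1}$. Here
\[
 \E[\zeta_j\xi_j]=\frac{\E\xi_j^2-\rho\,\E\xi_j}{\sqrt{1-\rho^2}}=\frac{1-\rho^2}{\sqrt{1-\rho^2}}=\sqrt{1-\rho^2},
\]
using $\xi_j^2=1$ and $\E\xi_j=\rho$. Multiplying by the prefactor $\rho/\sqrt{1-\rho^2}$ gives $\rho\cdot\rho^{|A|-1}W_A(\eps)=\rho^{|A|}W_A(\eps)$, which matches the left side.

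For the continuity statement, note that the left side is the polynomial $\sum_{A\ni j}\rho^{|A|}\wh h(A)W_A(\eps)$ in $\rho$. Every term has degree at least $1$, because $A\ni j$ is nonempty, so it tends to $0$ as $\rho\to0$. The right side coincides with this polynomial on $(0,1)$, so its continuous extension to $\rho=0$ is the same and also vanishes there. The whole argument is routine once reduced to Walsh functions; the only point requiring care is the computation of $\E[\zeta_j\xi_j]$, which relies on $\xi_j^2=1$.
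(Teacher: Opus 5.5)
Your proof is correct and follows the paper's argument: you reduce by linearity to $h=W_A$, note that both sides vanish when $j\notin A$ because $\E\zeta_j=0$, and for $j\in A$ you use independence together with $\E(\zeta_j\xi_j)=\sqrt{1-\rho^2}$ to obtain $\rho^{|A|}W_A(\eps)$. Your continuity argument spells out what the paper leaves implicit: the left side is a polynomial in $\rho$ with no constant term, and the right side agrees with it on $(0,1)$.
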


\begin{proof}
By linearity, it suffices to take $h=W_A$. If $j\notin A$, both sides vanish. If $j\in A$, then $\E(\zeta_j\xi_j)=\sqrt{1-\rho^2}$, and independence gives
\[
 \E_\xi[\zeta_jW_A(\eps\xi)]
 =W_A(\eps)\sqrt{1-\rho^2}\,\rho^{|A|-1}.
\]
Multiplication by $\rho/\sqrt{1-\rho^2}$ yields $\rho^{|A|}W_A(\eps)=T_\rho D_jW_A(\eps)$.
\end{proof}

\begin{proposition}[Pointwise reverse Poincar\'e estimate]\label{prop:reverse-poincare}
For every scalar $h:\Om\to\C$, every $\eps\in\Om$, and every $0<\rho<1$,
\begin{align}
 \sum_{j=1}^n\abs{T_\rho D_jh(\eps)}^2
 &\leq\frac{\rho^2}{1-\rho^2}
 \left(T_\rho(|h|^2)(\eps)-\abs{T_\rho h(\eps)}^2\right)\notag\\
 &\leq\frac{\rho^2}{1-\rho^2}T_\rho(|h|^2)(\eps).
\label{eq:reverse-poincare}
\end{align}
Consequently, for every $p\geq2$,
\begin{equation}
 \norm{\left(\sum_{j=1}^n\abs{T_\rho D_jh}^2\right)^{1/2}}_{L_p(\Om)}
 \leq\frac{\rho}{\sqrt{1-\rho^2}}\norm{h}_{L_p(\Om)}.
\label{eq:reverse-poincare-Lp}
\end{equation}
\end{proposition}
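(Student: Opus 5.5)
The plan is to obtain the pointwise bound from \cref{lem:biased-coefficient} and Bessel's inequality in the biased product space. Fix $\eps$ and $0<\rho<1$, and regard $F(\xi)=h(\eps\xi)$ as an element of $L_2$ with respect to the law of $\xi$. By \eqref{eq:biased-coefficient}, each value $T_\rho D_jh(\eps)$ equals $\rho(1-\rho^2)^{-1/2}$ times the inner product $\E_\xi[\zeta_jF(\xi)]$. Since $\zeta_j$ is real, this is $\langle F,\zeta_j\rangle$ with conjugation on the second slot, so the complex case needs no separate treatment.

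Because $1,\zeta_1,\ldots,\zeta_n$ is orthonormal, Bessel's inequality applied to the whole family gives
\[
 \abs{\E_\xi F}^2+\sum_{j=1}^n\abs{\E_\xi[\zeta_jF]}^2\leq\E_\xi\abs{F}^2 .
\]
I would then rewrite the two outer terms using the probabilistic description of the semigroup: $\E_\xi F=T_\rho h(\eps)$ and $\E_\xi|F|^2=T_\rho(|h|^2)(\eps)$. Multiplying through by $\rho^2/(1-\rho^2)$ gives the first inequality in \eqref{eq:reverse-poincare}. The second inequality follows by discarding the nonpositive term $-\abs{T_\rho h(\eps)}^2$.

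For \eqref{eq:reverse-poincare-Lp}, take square roots pointwise, raise to the power $p$, and integrate over $\eps$. It then remains to show that $\E_\eps\bigl(T_\rho(|h|^2)\bigr)^{p/2}\leq\E_\eps|h|^p$. This holds because $p/2\geq1$ and $T_\rho$ is a contraction on $L_{p/2}$, so
\[
 \norm{T_\rho(|h|^2)}_{p/2}\leq\norm{|h|^2}_{p/2}=\norm{h}_p^2 .
\]
Taking the $1/p$ power completes the argument.

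None of the steps is a genuine obstacle. The only points needing care are the conjugation convention in the complex-valued case and the use of $p\geq2$, which is exactly what makes $L_{p/2}$ contractivity available.
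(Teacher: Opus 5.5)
Your proposal is correct and follows the paper's proof essentially step for step. Both apply Bessel's inequality for the orthonormal family $1,\zeta_1,\ldots,\zeta_n$ to $\xi\mapsto h(\eps\xi)$ via \Cref{lem:biased-coefficient}, identify $\E_\xi F=T_\rho h(\eps)$ and $\E_\xi|F|^2=T_\rho(|h|^2)(\eps)$, and then use the $L_{p/2}$-contractivity of $T_\rho$ (valid since $p\geq2$) to obtain \eqref{eq:reverse-poincare-Lp}.
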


\begin{proof}
Fix $\eps$ and put $H_\eps(\xi)=h(\eps\xi)$. By \Cref{lem:biased-coefficient},
\[
 (T_\rho D_jh(\eps))_{j=1}^n
 =\frac{\rho}{\sqrt{1-\rho^2}}
 (\ip{H_\eps}{\zeta_j}_{L_2(\xi)})_{j=1}^n.
\]
Bessel's inequality for the orthonormal family $1,\zeta_1,\ldots,\zeta_n$ gives
\[
 \abs{\E_\xi H_\eps}^2+
 \sum_{j=1}^n\abs{\E_\xi[H_\eps\zeta_j]}^2
 \leq\E_\xi\abs{H_\eps}^2.
\]
Since $\E_\xi H_\eps=T_\rho h(\eps)$ and $\E_\xi|H_\eps|^2=T_\rho(|h|^2)(\eps)$, multiplication by $\rho^2/(1-\rho^2)$ proves the first inequality in \eqref{eq:reverse-poincare}; the second is immediate.

Taking square roots and then the $L_p(\eps)$ norm gives $\|(\sum_j|T_\rho D_jh|^2)^{1/2}\|_p\leq\frac{\rho}{\sqrt{1-\rho^2}}\|(T_\rho|h|^2)^{1/2}\|_p$. Since $p/2\geq1$ and $T_\rho$ is an $L_{p/2}$ contraction, the last norm is at most $\|h\|_p$.
\end{proof}

\begin{theorem}[Dimension-free inverse-gradient square function]\label{thm:inverse-gradient-square}
If $p\in[2,\infty)$, $\K\in\{\R,\C\}$, and $h\in L_p^0(\Om;\K)$, then
\begin{equation}
 \norm{\left(\sum_{j=1}^n\abs{D_j\Delta^{-1}h}^2\right)^{1/2}}_{L_p(\Om)}
 \leq\frac\pi2\norm{h}_{L_p(\Om)}.
\label{eq:inverse-gradient-square}
\end{equation}
The constant is independent of $n$ and $p$; no assertion is made that $\pi/2$ is optimal.
\end{theorem}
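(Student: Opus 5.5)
The plan is to integrate the pointwise reverse Poincaré estimate of \Cref{prop:reverse-poincare} along the first integral representation in \eqref{eq:coordinate-integral}. For mean-zero $h$, \eqref{eq:coordinate-integral} gives
\[
 D_j\Delta^{-1}h=\int_0^1 T_\rho D_jh\,\frac{d\rho}{\rho}
\]
as a finite-dimensional Bochner integral. Here $D_jh$ is unchanged when $h$ is replaced by $h-\E h$, so the mean-zero hypothesis is used only to justify this formula.

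Consider the vector-valued function $\rho\mapsto (T_\rho D_jh)_{j=1}^n$. It takes values in the Banach space $L_p(\Om;\ell_2^n)$, whose norm is $\|(\sum_j|F_j|^2)^{1/2}\|_{L_p}$. The square-function norm on the left of \eqref{eq:inverse-gradient-square} is exactly the norm of the Bochner integral $\int_0^1 (T_\rho D_jh)_j\,d\rho/\rho$ in this space. Minkowski's inequality for Bochner integrals then gives
\[
 \norm{\Bigl(\sum_j\abs{D_j\Delta^{-1}h}^2\Bigr)^{1/2}}_p
 \leq\int_0^1\norm{\Bigl(\sum_j\abs{T_\rho D_jh}^2\Bigr)^{1/2}}_p\frac{d\rho}{\rho}.
\]

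Next I would insert \eqref{eq:reverse-poincare-Lp}, valid for $0<\rho<1$ and $p\geq2$. The integrand is then bounded by $\frac{1}{\sqrt{1-\rho^2}}\|h\|_p$, and since $\int_0^1(1-\rho^2)^{-1/2}\,d\rho=\arcsin 1=\pi/2$, this yields \eqref{eq:inverse-gradient-square}. The real scalar field is a special case of the complex one.

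The only point needing care is integrability at the endpoints, since a Bochner integral over $(0,1)$ is being estimated by a possibly improper integral.

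Near $\rho=0$, the factor $1/\rho$ is harmless: by \eqref{eq:reverse-poincare-Lp}, the integrand of the Minkowski bound is $O(1)$, and in fact $T_\rho D_jh/\rho$ extends continuously to $\rho=0$.

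Near $\rho=1$, the bound $(1-\rho^2)^{-1/2}$ blows up but is integrable, while the actual integrand is bounded because everything is finite-dimensional. So the integrals converge absolutely, and applying Minkowski on $[\delta,1-\delta]$ and letting $\delta\to0$ is legitimate.

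I expect no genuine obstacle here. The substantive work is already contained in \Cref{prop:reverse-poincare}, and the present step is a routine integration of that bound.
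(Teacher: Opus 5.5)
Your proposal is correct and follows the paper's proof: you apply Minkowski's inequality in $L_p(\Omega_n;\ell_2^n)$ to the integral representation \eqref{eq:coordinate-integral}, insert \eqref{eq:reverse-poincare-Lp}, and use $\int_0^1(1-\rho^2)^{-1/2}\,d\rho=\pi/2$. The paper handles the endpoint $\rho=1$ by truncating at $r<1$, obtaining the bound $\arcsin(r)\|h\|_p$, and letting $r\uparrow1$ via the finite Walsh expansion; this is the same as your observation that the integrand is bounded, indeed polynomial in $\rho$, together with your $[\delta,1-\delta]$ limiting argument.
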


\begin{proof}
For $0<r<1$, define $\bm g^{(r)}=\int_0^r(T_\rho D_1h,\ldots,T_\rho D_nh)\,\frac{d\rho}{\rho}\in L_p(\Om;\ellTwo)$. The integrand extends continuously to $\rho=0$. Minkowski's integral inequality and \eqref{eq:reverse-poincare-Lp} give
\begin{align*}
 \norm{\bm g^{(r)}}_{L_p(\Om;\ellTwo)}
 &\leq\int_0^r
 \norm{(T_\rho D_jh)_{j=1}^n}_{L_p(\Om;\ellTwo)}\,\frac{d\rho}{\rho}\\
 &\leq\int_0^r\frac{d\rho}{\sqrt{1-\rho^2}}\norm{h}_p
 =\arcsin(r)\norm{h}_p.
\end{align*}
By the finite Walsh expansion and \eqref{eq:coordinate-integral}, $\bm g^{(r)}$ converges in $L_p(\Om;\ellTwo)$ as $r\uparrow1$ to $(D_j\Delta^{-1}h)_{j=1}^n$. Passing to the limit and using $\arcsin(r)\uparrow\pi/2$ proves the theorem.
\end{proof}

\section{Fixed-cardinality sampling and the arbitrary-chaos estimate}\label{sec:sampling-chaos}

The probabilistic input is the sharp-order Rosenthal inequality. There is a universal constant $C_R$ such that, whenever $p\geq2$ and $X_1,\ldots,X_N$ are independent mean-zero random variables with values in $\K\in\{\R,\C\}$,
\begin{equation}
 \norm{\sum_{r=1}^NX_r}_{L_p}
 \leq C_R\frac p{\log p}\left[
 \left(\sum_{r=1}^N\E|X_r|^p\right)^{1/p}
 +\left(\sum_{r=1}^N\E|X_r|^2\right)^{1/2}
 \right].
\label{eq:sharp-rosenthal}
\end{equation}
For real random variables this is the classical sharp-order Rosenthal inequality; the complex case follows by applying the real inequality to real and imaginary parts. The order $p/\log p$ is optimal \cite{Rosenthal1970,JSZ1985,Pinelis2015}. When a sharp-order formulation is stated only for sufficiently large $p$, the remaining bounded range follows from the classical Rosenthal inequality; since $p/\log p$ is bounded below on $[2,p_0]$, enlarging the universal constant yields \eqref{eq:sharp-rosenthal} for all $p\geq2$.

\begin{lemma}[Fixed-cardinality sampling with sharp $p$-dependence]\label{lem:fixed-cardinality}
There is a universal constant $C_0$ such that the following holds. Let $p\geq2$, $n\in\N$, $k\in[n]$, $t=k/n$, and $a_1,\ldots,a_n\in\K$, where $\K\in\{\R,\C\}$. If $S$ is uniform among the $k$-subsets of $[n]$, then
\begin{align}
 \left(\E_S\abs{\sum_{j\in S}a_j}^p\right)^{1/p}
 &\leq t\abs{\sum_{j=1}^na_j}\notag\\
 &\quad+C_0\betaP\left[
 \left(t\sum_{j=1}^n|a_j|^p\right)^{1/p}
 +\sqrt t\left(\sum_{j=1}^n|a_j|^2\right)^{1/2}
 \right].
\label{eq:fixed-cardinality}
\end{align}
\end{lemma}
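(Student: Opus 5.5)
Write $A=\sum_{j=1}^n a_j$ and $\sum_{j\in S}a_j=tA+\sum_{j=1}^n(\1_{\{j\in S\}}-t)a_j$. The first term is exactly $t\abs{\sum_j a_j}$. So it suffices to bound the centered sum $Y=\sum_j(\1_{\{j\in S\}}-t)a_j$ in $L_p$ by $C_0\betaP$ times the bracket. I would reduce fixed-cardinality sampling to independent Bernoulli selection, where \eqref{eq:sharp-rosenthal} applies directly.

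\textbf{The Bernoulli model.} Let $\delta_1,\ldots,\delta_n$ be i.i.d.\ Bernoulli$(t)$. The summands $X_j=(\delta_j-t)a_j$ are independent and mean zero, with
\[
 \E|X_j|^p\leq t|a_j|^p\bigl((1-t)^p+t^{p-1}(1-t)\bigr)\cdot\frac1{\,\cdot\,}\lesssim t|a_j|^p,
 \qquad
 \E|X_j|^2=t(1-t)|a_j|^2\leq t|a_j|^2 .
\]
Hence \eqref{eq:sharp-rosenthal} gives
\[
 \norm{\sum_j(\delta_j-t)a_j}_p\leq C_R\betaP\Bigl[\bigl(t\textstyle\sum_j|a_j|^p\bigr)^{1/p}+\sqrt t\bigl(\textstyle\sum_j|a_j|^2\bigr)^{1/2}\Bigr].
\]

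\textbf{Transfer to fixed cardinality.} The cleanest route is the classical comparison of Hoeffding: sampling without replacement is dominated in convex order by sampling with replacement. Concretely, $\E\Phi(\sum_{j\in S}a_j)\leq\E\Phi(\sum_{r=1}^k a_{J_r})$ for convex $\Phi$, where the $J_r$ are i.i.d.\ uniform on $[n]$. Taking $\Phi(z)=|z-tA|^p$ (convex on $\C\cong\R^2$, and Hoeffding's argument is stated for vector-valued convex functions), we get
\[
 \norm{Y}_p\leq\norm{\textstyle\sum_{r=1}^k(a_{J_r}-A/n)}_p .
\]
The right side is a sum of $k$ i.i.d.\ mean-zero variables $X_r=a_{J_r}-\bar a$, where $\bar a=A/n$. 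Applying \eqref{eq:sharp-rosenthal}:
\begin{itemize}
\item the $p$-th moment term is $k\,\E|X_1|^p\leq 2^{p}\,k\cdot\frac1n\sum_j|a_j|^p$, using $|\bar a|^p\leq\frac1n\sum_j|a_j|^p$ by Jensen; the factor $2^p$ contributes only $2$ after taking the $p$-th root;
\item the variance term is $k\,\E|X_1|^2\leq t\sum_j|a_j|^2$.
\end{itemize}
This yields \eqref{eq:fixed-cardinality} with $C_0=2C_R$. With this route the Bernoulli model is not even needed.

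\textbf{Main obstacle.} The expected difficulty is justifying Hoeffding's convex-order comparison rigorously in the complex/vector setting, and in particular making sure the centering is compatible. The centering is fine: the $k$ with-replacement draws have mean $k\bar a=tA$, the same mean as $\sum_{j\in S}a_j$.

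If I wanted to avoid citing Hoeffding, I would instead prove the comparison directly. One route is to condition the Bernoulli model on $|\{j:\delta_j=1\}|=k$ and use a convexity/Jensen argument in the parameter $k$. The cleaner route is the standard coupling that realizes $S$ as the set of distinct values among the with-replacement draws, and then applies conditional Jensen.
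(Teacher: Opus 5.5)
Your proposal is correct and follows the paper's proof: Hoeffding's convex-order comparison with the with-replacement sum, then the sharp Rosenthal inequality for the centered i.i.d.\ draws $a_{J_r}-\bar a$, with the same Jensen and variance bookkeeping giving $C_0=2C_R$ (and, as you note, the Bernoulli computation is not needed). The differences are cosmetic. You center inside $\Phi$ and appeal to a vector-valued Hoeffding for complex scalars, whereas the paper applies the real comparison to $|u|^p$, splits off $t\sum_j a_j$ by Minkowski afterwards, and reduces $\C$ to $\R$ via $|z|^p=(2\pi c_p)^{-1}\int_0^{2\pi}|\operatorname{Re}(e^{i\theta}z)|^p\,d\theta$. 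Also, in your fallback coupling the distinct indices among the $k$ draws should be embedded in $S$ rather than identified with it, since there may be fewer than $k$ of them.
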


\begin{proof}
Let $I_1,\ldots,I_k$ be a uniformly random ordered sample of $k$ distinct indices from $[n]$. Then $\sum_{j\in S}a_j$ and $\sum_{r=1}^ka_{I_r}$ have the same distribution. Let $J_1,\ldots,J_k$ be independent and uniform on $[n]$, put $Y_r=a_{J_r}$, and set $a_{\mathrm{av}}=n^{-1}\sum_{j=1}^na_j$.

Assume first that the scalars are real. Hoeffding's convex-order comparison for sampling without replacement \cite[Theorem~4]{Hoeffding1963} says that every convex function of the without-replacement sum has expectation no larger than the corresponding function of the with-replacement sum; see also \cite{BardenetMaillard2015}. Taking $\Phi(u)=|u|^p$ gives
\begin{equation}
 \left(\E_S\abs{\sum_{j\in S}a_j}^p\right)^{1/p}
 \leq\norm{\sum_{r=1}^kY_r}_{L_p}.
\label{eq:hoeffding-lp}
\end{equation}
For complex scalars, put
\[
 c_p=\frac1{2\pi}\int_0^{2\pi}|\cos\theta|^p\,d\theta,
 \qquad
 |z|^p=\frac1{2\pi c_p}\int_0^{2\pi}
 |\operatorname{Re}(e^{i\theta}z)|^p\,d\theta.
\]
Apply the real comparison to the populations $(\operatorname{Re}(e^{i\theta}a_j))_{j=1}^n$ and integrate in $\theta$. This proves \eqref{eq:hoeffding-lp} over $\C$ as well.

Decompose $\sum_{r=1}^kY_r=ka_{\mathrm{av}}+\sum_{r=1}^k(Y_r-a_{\mathrm{av}})$. Since $ka_{\mathrm{av}}=t\sum_ja_j$, Minkowski's inequality and \eqref{eq:sharp-rosenthal} imply
\begin{align}
 \norm{\sum_{r=1}^kY_r}_{L_p}
 &\leq t\abs{\sum_{j=1}^na_j}
 +C_R\betaP\left[
 \left(k\E|Y_1-a_{\mathrm{av}}|^p\right)^{1/p}
 +\left(k\E|Y_1-a_{\mathrm{av}}|^2\right)^{1/2}
 \right].
\label{eq:rosenthal-centered}
\end{align}
On the uniform probability space $[n]$, Minkowski's and H\"older's inequalities give $(k\E|Y_1-a_{\mathrm{av}}|^p)^{1/p}\leq(\frac{k}{n}\sum_{j=1}^n|a_j|^p)^{1/p}+k^{1/p}|a_{\mathrm{av}}|\leq2(t\sum_{j=1}^n|a_j|^p)^{1/p}$. Moreover, $k\E|Y_1-a_{\mathrm{av}}|^2=k(\frac1n\sum_{j=1}^n|a_j|^2-|a_{\mathrm{av}}|^2)\leq t\sum_{j=1}^n|a_j|^2$.
Substitution in \eqref{eq:rosenthal-centered}, followed by \eqref{eq:hoeffding-lp}, proves \eqref{eq:fixed-cardinality} after enlarging the universal constant.
\end{proof}

\begin{remark}\label{rem:sampling-sharpness}
The adjective ``sharp'' in \Cref{lem:fixed-cardinality} refers only to the order $p/\log p$. No sharpness is asserted in the finite-population parameter $t=k/n$. For example, if $k=n$ and $\sum_ja_j=0$, then the left-hand side vanishes identically, whereas the two fluctuation terms on the right need not vanish. This loss is inherent in the comparison with sampling with replacement and is irrelevant for the application below.
\end{remark}

\begin{proof}[Proof of \Cref{thm:refined-chaos} for scalar functions]
Let $g_j=D_j\Delta^{-1}h$. By \Cref{prop:factorization} and the contractivity of $P_S$,
\begin{equation}
 \left(\E_S\norm{P_Sh}_p^p\right)^{1/p}
 \leq\left(\E_{\eps,S}\abs{\sum_{j\in S}g_j(\eps)}^p\right)^{1/p}.
\label{eq:chaos-reduction}
\end{equation}
For each fixed $\eps$, apply \Cref{lem:fixed-cardinality} to $a_j=g_j(\eps)$. The reconstruction identity $\sum_jg_j=h$ gives
\begin{align*}
 \left(\E_S\abs{\sum_{j\in S}g_j(\eps)}^p\right)^{1/p}
 &\leq t|h(\eps)|
 +C_0\betaP\left(t\sum_{j=1}^n|g_j(\eps)|^p\right)^{1/p}\\
 &\quad+C_0\betaP\sqrt t
 \left(\sum_{j=1}^n|g_j(\eps)|^2\right)^{1/2}.
\end{align*}
Taking the $L_p(\eps)$ norm and applying Minkowski's inequality yields
\begin{align}
 \left(\E_{\eps,S}\abs{\sum_{j\in S}g_j(\eps)}^p\right)^{1/p}
 &\leq t\norm{h}_p
 +C_0\betaP\left(t\sum_{j=1}^n\norm{g_j}_p^p\right)^{1/p}\notag\\
 &\quad+C_0\betaP\sqrt t\,
 \norm{\left(\sum_{j=1}^n|g_j|^2\right)^{1/2}}_p.
\label{eq:chaos-three-terms}
\end{align}
The first mixed norm is exact, and the square-function term is controlled by \Cref{thm:inverse-gradient-square}:
\begin{align*}
 \norm{\left(t\sum_j|g_j|^p\right)^{1/p}}_p^p
 &=t\sum_j\norm{g_j}_p^p,\\
 \norm{\left(\sum_j|g_j|^2\right)^{1/2}}_p
 &\leq\frac\pi2\norm h_p.
\end{align*}
Moreover, \Cref{lem:coordinate-contraction} gives $\|g_j\|_p\leq\|D_jh\|_p$ for every $j$. Substitution in \eqref{eq:chaos-three-terms}, followed by \eqref{eq:chaos-reduction}, proves \eqref{eq:refined-chaos}.
\end{proof}

\begin{proof}[Bochner-valued extension in \Cref{thm:refined-chaos}]
Let $h:\Om\to L_p(M,\nu)$ satisfy $\E_\eps h(\eps)=0$ in $L_p(M,\nu)$. Because $\Om$ is finite, choose representatives $h(\eps,z)$ for all $\eps$ on a common conull subset of $M$. Then $\E_\eps h(\eps,z)=0$ for almost every $z$.

Apply the scalar argument pointwise in $z$. The operators $D_j$, $P_S$, $T_\rho$, and $\Delta^{-1}$ act only in the $\eps$ variable, so the reconstruction and restriction identities hold for almost every $z$. The fixed-cardinality estimate is applied pointwise in $(\eps,z)$, and the reverse Poincar\'e estimate is applied to the scalar function $\eps\mapsto h(\eps,z)$. Tonelli's theorem and the canonical isometry $L_p(\Om;L_p(M,\nu))\cong L_p(\Om\times M,\muN\otimes\nu)$ then turn every scalar $L_p(\eps,z)$ norm into the corresponding Bochner norm. No constant changes, and the argument is valid over both scalar fields.
\end{proof}

\begin{proof}[Proof of \Cref{thm:main-chaos}]
Since $0<t\leq1$, one has $t\leq\sqrt t$; moreover, $\betaP\geq1$ for $p\geq2$. Hence \eqref{eq:refined-chaos} implies
\[
 \left(\E_S\norm{P_Sh}_p^p\right)^{1/p}
 \leq C_1\betaP\left[
 \left(t\sum_{j=1}^n\norm{D_jh}_p^p\right)^{1/p}
 +\sqrt t\,\norm{h}_p\right]
\]
with a universal $C_1$. Since $\partial_j=2D_j$ and $A+B\leq2^{1-1/p}(A^p+B^p)^{1/p}$ for $A,B\geq0$, this gives \eqref{eq:main-chaos} after changing the universal constant. The lower bound is proved in \Cref{sec:optimality}.
\end{proof}

\section{Deduction of the metric inequality}\label{sec:metric-transfer}

We now transfer \Cref{thm:main-chaos} to the metric statement. The argument follows the smoothing scheme of \cite{Naor2016}; every step is included in order to track the dependence on $p$. Fix $m,n\in\N$, put $G=\mathbb Z_{8m}^n$, and let $\K\in\{\R,\C\}$. All additions in this section take place in $G$, and all $L_p(G)$ norms use normalized counting measure.

For $A\subseteq[n]$ and $f:G\to\K$, define $T_Af(x)=\E_\delta f(x+2\delta_A)$, where $\delta_A=\sum_{j\in A}\delta_je_j$. This is an average of translations and hence an $L_p(G)$ contraction.

\begin{lemma}[Smoothing estimate]\label{lem:smoothing}
For every $1\leq p<\infty$, $A\subseteq[n]$, and $f:G\to\K$,
\begin{equation}
 \norm{f-T_Af}_{L_p(G)}
 \leq2\norm{f(x+\eps)-f(x)}_{L_p(x,\eps)}.
\label{eq:smoothing}
\end{equation}
\end{lemma}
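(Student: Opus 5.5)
We prove \eqref{eq:smoothing} by writing $f-T_Af$ as an average of differences and then moving from an increment of size $2\delta_A$ to full random sign increments. Since $T_Af(x)=\E_\delta f(x+2\delta_A)$, the triangle inequality for averages gives
\[
 \norm{f-T_Af}_{L_p(G)}\leq\norm{f(x+2\delta_A)-f(x)}_{L_p(x,\delta)}.
\]
It therefore suffices to show that $\norm{f(x+2\delta_A)-f(x)}_{L_p(x,\delta)}$ is at most twice $\norm{f(x+\eps)-f(x)}_{L_p(x,\eps)}$.

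The key observation is that $2\delta_A$ is the sum of two independent full-sign vectors. Take $\eps,\eps'\in\Om$ independent and uniform. Then $\eps+\eps'$ has $j$-th coordinate in $\{-2,0,2\}$, which is not exactly $2\delta_A$. Instead we use the identity $2\delta_A=\eps+\eps''$, where $\eps''_j=\delta_j$ for $j\in A$ with $\eps_j=\delta_j$, and so on. The concrete choice is as follows. Given $\delta$ and an independent uniform $\eps$, define $\eps'$ coordinatewise:
\begin{itemize}
 \item for $j\in A$, set $\eps'_j=2\delta_j-\eps_j$;
 \item for $j\notin A$, set $\eps'_j=-\eps_j$.
\end{itemize}
Then $\eps+\eps'=2\delta_A$ exactly.

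The obstacle is that $2\delta_j-\eps_j\in\{\pm1,\pm3\}$, which is not a sign. We repair this using the cyclic group $\mathbb Z_{8m}$: the only requirement is that each piece be distributed like $x\mapsto x+\text{(uniform sign vector)}$ after a measure-preserving change of variables.

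The working alternative is $2\delta_A=\eps+\eps'$ with $\eps'_j=\delta_j$ when $j\in A$ (and $\eps_j=\delta_j$ there). On coordinates outside $A$ we take $\eps'_j=-\eps_j$, with $\eps$ uniform. Conditionally on this, $\eps$ restricted to $A$ equals $\delta_A$, which is uniform, and outside $A$ it is uniform and independent. So $\eps$ is a uniform sign vector, and $\eps'=(\delta_A,-\eps_{A^c})$ is also uniform.

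We then split
\[
 f(x+2\delta_A)-f(x)=\bigl[f(x+\eps+\eps')-f(x+\eps)\bigr]+\bigl[f(x+\eps)-f(x)\bigr].
\]
In the first bracket, translation invariance of normalized counting measure on $G$ lets us substitute $y=x+\eps$ for fixed $\eps,\eps'$, so that bracket has the law of $f(y+\eps')-f(y)$ with $\eps'$ uniform. Each bracket therefore has $L_p(x,\delta,\eps)$ norm equal to $\norm{f(x+\eps)-f(x)}_{L_p(x,\eps)}$, and Minkowski's inequality gives the factor $2$.

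The main point to verify carefully is the distributional claim: that both $\eps$ and $\eps'$ are individually uniform on $\Om$ under the joint construction from $\delta$ and the auxiliary signs outside $A$. Joint independence is not needed, since each term is estimated separately.
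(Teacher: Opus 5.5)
Your proof is correct and is essentially the paper's argument. Your final choice $\eps=(\delta_A,\eps_{A^c})$, $\eps'=(\delta_A,-\eps_{A^c})$ is exactly the paper's insertion of the intermediate point $x+\delta_A+\delta_{A^c}$, with your auxiliary signs playing the role of $\delta_{A^c}$, and you use Minkowski where the paper uses $|u+v|^p\leq 2^{p-1}(|u|^p+|v|^p)$; both give the constant $2$. In a final write-up, delete the abandoned first construction with $2\delta_j-\eps_j$ and the remark about repairing it via $\mathbb Z_{8m}$, since neither is needed.
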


\begin{proof}
Jensen's inequality gives $|f(x)-T_Af(x)|^p\leq\E_\delta|f(x)-f(x+2\delta_A)|^p$. Insert $x+\delta_A+\delta_{A^c}$ between the two endpoints and use $|u+v|^p\leq2^{p-1}(|u|^p+|v|^p)$. After averaging in $(x,\delta)$, the first increment is a uniform full-sign increment. By translation invariance in $x$, the second has the same average as the increment by $-\delta_A+\delta_{A^c}$, which is also uniformly distributed on $\{-1,1\}^n$. Therefore $\E_x|f(x)-T_Af(x)|^p\leq2^p\E_{x,\eps}|f(x+\eps)-f(x)|^p$. Taking $p$th roots proves the claim.
\end{proof}

For each $x\in G$, define the odd cube function $h_x(\eps)=f(x+2\eps)-f(x-2\eps)$. Since $h_x(-\eps)=-h_x(\eps)$, one has $\E_\eps h_x(\eps)=0$.

\begin{lemma}[Exact cube--torus identity]\label{lem:cube-torus}
For every $S\subseteq[n]$, $x\in G$, and $\eps\in\Om$,
\begin{equation}
 T_{S^c}f(x+2\eps_S)-T_{S^c}f(x-2\eps_S)=P_Sh_x(\eps).
\label{eq:cube-torus}
\end{equation}
\end{lemma}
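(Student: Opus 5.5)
We will verify \eqref{eq:cube-torus} directly, by computing both sides as explicit averages of values of $f$ and comparing them. The only input beyond the definitions is the description of $P_S$ as a conditional expectation:
\[
 P_Sh_x(\eps)=\E_{\eta}\, h_x(\eps_S+\eta_{S^c}),
\]
where $\eta$ is uniform on $\Om$ and $\eps_S+\eta_{S^c}$ denotes the sign vector that agrees with $\eps$ on $S$ and with $\eta$ on $S^c$. This is just the statement $P_S=\E_{[n]\setminus S}$, which is equivalent to the Walsh formula $P_Sh=\sum_{A\subseteq S}\wh h(A)W_A$ recorded earlier.

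First we expand the right-hand side. Using the definition of $h_x$,
\[
 P_Sh_x(\eps)=\E_\eta\bigl[f(x+2\eps_S+2\eta_{S^c})-f(x-2\eps_S-2\eta_{S^c})\bigr].
\]
Here $\eps_S$ is the group element $\sum_{j\in S}\eps_je_j$, and similarly $\eta_{S^c}=\sum_{j\notin S}\eta_je_j$.

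Next we expand the left-hand side. By the definition of $T_{S^c}$,
\[
 T_{S^c}f(x+2\eps_S)=\E_\delta f(x+2\eps_S+2\delta_{S^c})
 \quad\text{and}\quad
 T_{S^c}f(x-2\eps_S)=\E_\delta f(x-2\eps_S+2\delta_{S^c}).
\]
The first average matches the first term of $P_Sh_x(\eps)$ directly, with $\eta=\delta$. For the second term, we substitute $\delta\mapsto-\delta$, which preserves the uniform law on $\Om$; this gives $\E_\delta f(x-2\eps_S-2\delta_{S^c})$, matching the second term with $\eta=\delta$. Subtracting the two averages gives \eqref{eq:cube-torus}.

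There is no real obstacle in this argument. The one point needing care is the sign flip $\delta\mapsto-\delta$, which is legitimate because $\delta$ ranges over all of $\{-1,1\}^n$ uniformly and only its restriction to $S^c$ enters. We should also note that additions of the form $x\pm2\eps_S$ take place in $G=\mathbb Z_{8m}^n$, with the signs embedded as the integers $\pm1$, but no wrap-around issue arises since the identity is purely formal. Over either scalar field, and likewise for Banach-valued $f$, the proof is identical.
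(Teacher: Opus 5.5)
Your proposal is correct and is essentially the paper's own proof. You expand $P_Sh_x(\eps)$ as the average over the coordinates in $S^c$, identify the first term directly with $T_{S^c}f(x+2\eps_S)$, and use the symmetry $\delta\mapsto-\delta$ to identify the second term with $T_{S^c}f(x-2\eps_S)$.
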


\begin{proof}
Expanding the conditional expectation gives $P_Sh_x(\eps)=\E_{\eta_{S^c}}[f(x+2\eps_S+2\eta_{S^c})-f(x-2\eps_S-2\eta_{S^c})]$. The first average is $T_{S^c}f(x+2\eps_S)$. Since $-\eta_{S^c}$ has the same distribution as $\eta_{S^c}$, the second is $T_{S^c}f(x-2\eps_S)$.
\end{proof}

\begin{lemma}[Telescoping the smoothed long jump]\label{lem:smoothed-telescope}
For every $S\subseteq[n]$,
\begin{equation}
 \frac1m\norm{T_{S^c}f(x+4m\eps_S)-T_{S^c}f(x)}_{L_p(x,\eps)}
 \leq\norm{P_Sh_x(\eps)}_{L_p(x,\eps)}.
\label{eq:smoothed-telescope}
\end{equation}
\end{lemma}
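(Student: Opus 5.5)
The plan is to telescope the long jump $4m\eps_S$ into $m$ steps of length $4\eps_S$. Each step is then recognized, after a translation in $x$, as a copy of the cube function $P_Sh_x(\eps)$ from \Cref{lem:cube-torus}. Write $F=T_{S^c}f$. For fixed $\eps$ we have
\[
 F(x+4m\eps_S)-F(x)=\sum_{i=0}^{m-1}\bigl[F(x+4(i+1)\eps_S)-F(x+4i\eps_S)\bigr].
\]
Put $y_i=x+(4i+2)\eps_S$. The $i$th summand is then $F(y_i+2\eps_S)-F(y_i-2\eps_S)$. By \eqref{eq:cube-torus} applied at the point $y_i$, this equals $P_Sh_{y_i}(\eps)$.

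Next, apply Minkowski's inequality in $L_p(x,\eps)$:
\[
 \norm{F(x+4m\eps_S)-F(x)}_{L_p(x,\eps)}\leq\sum_{i=0}^{m-1}\norm{P_Sh_{x+(4i+2)\eps_S}(\eps)}_{L_p(x,\eps)}.
\]
For each fixed $\eps$ and $i$, the map $x\mapsto x+(4i+2)\eps_S$ is a bijection of $G$. It therefore preserves normalized counting measure, so
\[
 \E_x\abs{P_Sh_{x+(4i+2)\eps_S}(\eps)}^p=\E_x\abs{P_Sh_x(\eps)}^p.
\]
Averaging over $\eps$ shows that each of the $m$ terms equals $\norm{P_Sh_x(\eps)}_{L_p(x,\eps)}$. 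Dividing by $m$ gives \eqref{eq:smoothed-telescope}.

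The only point needing care is the translation step. The change of variables must be done with $\eps$ held fixed, because the shift $(4i+2)\eps_S$ depends on $\eps$. Since $P_Sh_x(\eps)$ depends jointly on $(x,\eps)$, we need $x\mapsto x+c$ to be measure preserving on $G$ for each constant $c$, which it is. The function $h_{x}$ is then simply evaluated at the shifted base point, with the same $\eps$. With $\eps$ fixed throughout, the argument goes through without difficulty. In the Bochner-valued case the same computation applies verbatim with $\abs{\cdot}$ replaced by the $L_p(M)$ norm.
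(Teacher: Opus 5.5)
Your proposal is correct and follows essentially the same route as the paper: you telescope into $m$ steps of length $4\eps_S$ and identify each step with $P_Sh$ via \Cref{lem:cube-torus} at the midpoint $x+(4i+2)\eps_S$, which is the paper's $y=x+2(2r-1)\eps_S$ with $r=i+1$. You then conclude with Minkowski's inequality and translation invariance in $x$, and your explicit remark that this change of variables is made with $\eps$ held fixed usefully spells out what the paper compresses into ``translation invariance in $x$.''
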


\begin{proof}
For fixed $(x,\eps)$, telescope the long increment into $m$ increments of length $4\eps_S$:
\[
 T_{S^c}f(x+4m\eps_S)-T_{S^c}f(x)
 =\sum_{r=1}^m\bigl[T_{S^c}f(x+4r\eps_S)-T_{S^c}f(x+4(r-1)\eps_S)\bigr].
\]
Minkowski's inequality bounds the $L_p(x,\eps)$ norm by the sum of the norms of these terms. In the $r$th term, set $y=x+2(2r-1)\eps_S$. Translation invariance in $x$ shows that its norm equals $\|T_{S^c}f(y+2\eps_S)-T_{S^c}f(y-2\eps_S)\|_{L_p(y,\eps)}$, which is $\|P_Sh_y\|_{L_p(y,\eps)}$ by \Cref{lem:cube-torus}. All $m$ terms have the same norm, and division by $m$ proves the result.
\end{proof}

\begin{lemma}[Derivative and norm comparison]\label{lem:hx-comparison}
For every $p\geq1$, every $f:G\to\K$, and every $j\in[n]$,
\begin{align}
 \E_x\norm{\partial_jh_x}_{L_p(\Om)}^p
 &\leq8^p\E_x|f(x+e_j)-f(x)|^p,
\label{eq:hx-derivative}\\
 \E_x\norm{h_x}_{L_p(\Om)}^p
 &\leq4^p\E_{x,\eps}|f(x+\eps)-f(x)|^p.
\label{eq:hx-norm}
\end{align}
\end{lemma}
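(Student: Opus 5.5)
Both estimates are direct computations from the definition $h_x(\eps)=f(x+2\eps)-f(x-2\eps)$, combined with translation invariance of normalized counting measure on $G$ and the elementary bound $|u_1+\cdots+u_N|^p\leq N^{p-1}\sum_i|u_i|^p$.

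For \eqref{eq:hx-derivative}, fix $j$ and write $\eps^{(j)}$ for $\eps$ with its $j$th sign flipped. Then
\[
 \partial_jh_x(\eps)=h_x(\eps)-h_x(\eps^{(j)})
 =\bigl[f(x+2\eps)-f(x+2\eps^{(j)})\bigr]-\bigl[f(x-2\eps)-f(x-2\eps^{(j)})\bigr].
\]
The points $x+2\eps$ and $x+2\eps^{(j)}$ differ by $4\eps_je_j$, and likewise for the second bracket. Telescoping each bracket through four unit steps in direction $e_j$ writes $\partial_jh_x(\eps)$ as a sum of $8$ terms, each of the form $\pm\bigl(f(y+e_j)-f(y)\bigr)$. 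Here each base point $y$ equals $x$ plus a shift depending only on $\eps$. Applying the power-mean bound with $N=8$ gives $|\partial_jh_x(\eps)|^p\leq8^{p-1}\sum_{i=1}^8|f(y_i+e_j)-f(y_i)|^p$. Averaging over $x\in G$ and $\eps\in\Om$, translation invariance in $x$ turns each of the $8$ terms into $\E_x|f(x+e_j)-f(x)|^p$. This gives $8^p\E_x|f(x+e_j)-f(x)|^p$. The only care needed is the sign bookkeeping in the telescoping, since the step direction is $\pm e_j$ according to $\eps_j$. A step in direction $-e_j$ is handled by rewriting $f(y-e_j)-f(y)=-(f(y')-f(y'-e_j))$ and translating $y'=y-e_j$.

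For \eqref{eq:hx-norm}, write
\[
 f(x+2\eps)-f(x-2\eps)=\sum_{i=1}^4\bigl[f(x+(4-2i)\eps+2\eps)-f(x+(4-2i)\eps)\bigr]\cdot\tfrac12\cdot2 ,
\]
or more simply as the sum of four increments $f(z+\eps)-f(z)$ with $z=x-2\eps,\,x-\eps,\,x,\,x+\eps$. The power-mean bound with $N=4$ then gives $|h_x(\eps)|^p\leq4^{p-1}\sum_{i=1}^4|f(z_i+\eps)-f(z_i)|^p$, where $z_i=x+c_i\eps$ for integers $c_i$. For fixed $\eps$, the substitution $x\mapsto x+c_i\eps$ is a bijection of $G$ preserving normalized counting measure. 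So each term averages to $\E_{x,\eps}|f(x+\eps)-f(x)|^p$, and summing yields $4^p$ times that quantity.

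No step is a genuine obstacle. The only point requiring attention is that every shift used in a translation depends only on $\eps$ (or is a fixed vector), never on $x$, so that for each fixed $\eps$ the change of variables in $x$ is legitimate.
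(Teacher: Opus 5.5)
Your argument is correct and is essentially the paper's proof. For \eqref{eq:hx-derivative} the paper first splits with $2^{p-1}$ to reduce to $\E_y|f(y+4e_j)-f(y)|^p$ and then telescopes with $4^{p-1}$; you telescope directly into eight unit increments with $8^{p-1}$, which gives the same constant $2^p4^p=8^p$. For \eqref{eq:hx-norm}, your second four-step decomposition through $x-2\eps,x-\eps,x,x+\eps$ is exactly the paper's.

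Two slips need fixing:
\begin{itemize}
\item Your first displayed formula for $h_x(\eps)$ is wrong as written, since it telescopes to $f(x+4\eps)-f(x-4\eps)$.
\item The rewriting of a $-e_j$ step should read $f(y-e_j)-f(y)=-\bigl(f(y'+e_j)-f(y')\bigr)$ with $y'=y-e_j$.
\end{itemize}
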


\begin{proof}
Since $\eps^{(j)}=\eps-2\eps_je_j$,
\begin{align*}
 \partial_jh_x(\eps)
 &=f(x+2\eps)-f(x+2\eps-4\eps_je_j)\\
 &\quad-f(x-2\eps)+f(x-2\eps+4\eps_je_j).
\end{align*}
Using $|u+v|^p\leq2^{p-1}(|u|^p+|v|^p)$ and averaging over $(x,\eps)$, translation invariance and the symmetry $\eps_j\leftrightarrow-\eps_j$ give $\E_x\|\partial_jh_x\|_p^p\leq2^p\E_y|f(y+4e_j)-f(y)|^p$. The four-step telescoping identity $f(y+4e_j)-f(y)=\sum_{r=1}^4[f(y+re_j)-f(y+(r-1)e_j)]$ and convexity yield $|f(y+4e_j)-f(y)|^p\leq4^{p-1}\sum_{r=1}^4|f(y+re_j)-f(y+(r-1)e_j)|^p$. After averaging in $y$, all four terms have the same mean, so the last mean is at most $4^p\E_y|f(y+e_j)-f(y)|^p$. This proves \eqref{eq:hx-derivative} because $2^p4^p=8^p$.

For \eqref{eq:hx-norm}, telescope from $x-2\eps$ to $x+2\eps$ in four full-sign steps: $h_x(\eps)=\sum_{r=-1}^{2}[f(x+r\eps)-f(x+(r-1)\eps)]$. Convexity bounds $|h_x(\eps)|^p$ by $4^{p-1}$ times the sum of the four $p$th powers. Averaging over $(x,\eps)$ and translating $x$ in each term turns every summand into $\E_{x,\eps}|f(x+\eps)-f(x)|^p$, proving \eqref{eq:hx-norm}.
\end{proof}

\begin{proposition}[Smoothed metric inequality]\label{prop:smoothed-metric}
Let $p\geq2$, $k\in[n]$, $t=k/n$, and $m\in\N$. Every scalar $f:G\to\K$ satisfies
\begin{align}
&\left(\E_{S,x,\eps}
 \abs{T_{S^c}f(x+4m\eps_S)-T_{S^c}f(x)}^p\right)^{1/p}\notag\\
&\quad\leq C_2\betaP m
 \left[t\sum_{j=1}^n\E_x|f(x+e_j)-f(x)|^p
 +t^{p/2}\E_{x,\eps}|f(x+\eps)-f(x)|^p
 \right]^{1/p},
\label{eq:smoothed-metric}
\end{align}
where $C_2$ is universal.
\end{proposition}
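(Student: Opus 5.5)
The plan is to chain the lemmas of this section with \Cref{thm:main-chaos}, applied to the odd cube functions $h_x$ for each fixed $x$, and then average over $x$.

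\textbf{Step 1: reduce to the restrictions $P_Sh_x$.} For a fixed $k$-subset $S$, \Cref{lem:smoothed-telescope} gives
\[
\E_{x,\eps}\abs{T_{S^c}f(x+4m\eps_S)-T_{S^c}f(x)}^p\leq m^p\,\E_x\norm{P_Sh_x}_{L_p(\Om)}^p .
\]
Averaging over $S$ uniform among the $k$-subsets bounds the $p$th power of the left side of \eqref{eq:smoothed-metric} by
\[
m^p\,\E_x\Bigl[\binom nk^{-1}\sum_{|S|=k}\norm{P_Sh_x}_p^p\Bigr].
\]

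\textbf{Step 2: apply the chaos estimate.} Each $h_x$ is mean zero, because it is odd in $\eps$. So \Cref{thm:main-chaos}, raised to the $p$th power, gives for every fixed $x$
\[
\binom nk^{-1}\sum_{|S|=k}\norm{P_Sh_x}_p^p\leq (C\betaP)^p\Bigl[t\sum_j\norm{\partial_jh_x}_p^p+t^{p/2}\norm{h_x}_p^p\Bigr].
\]
The key structural point is that the bound is linear in the $p$th powers. It can therefore be averaged over $x$ directly, with no interchange of norms.

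\textbf{Step 3: return to increments of $f$.} Apply \Cref{lem:hx-comparison}:
\[
\E_x\norm{\partial_jh_x}_p^p\leq 8^p\,\E_x|f(x+e_j)-f(x)|^p,\qquad \E_x\norm{h_x}_p^p\leq 4^p\,\E_{x,\eps}|f(x+\eps)-f(x)|^p .
\]
Substituting these and taking $p$th roots yields \eqref{eq:smoothed-metric} with $C_2=8C$. No $p$-dependence is lost, since the factors $8$ and $4$ sit inside $p$th powers.

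\textbf{Where care is needed.} There is no real obstacle here; the main point is bookkeeping. First, the average over $S$ must be taken after \Cref{lem:smoothed-telescope}, which holds for each $S$ separately. Second, the left side of \eqref{eq:smoothed-metric} has $\E_S$ inside the $p$th root, which matches the averaged form above.
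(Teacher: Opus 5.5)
Your proposal is correct and follows the paper's proof essentially step for step. You average \Cref{lem:smoothed-telescope} over $S$ in $p$th-power form, apply \Cref{thm:main-chaos} to each mean-zero $h_x$ and average in $x$, and then use \Cref{lem:hx-comparison}. The factor $8$ is absorbed into the universal constant, giving $C_2=8C$.
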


\begin{proof}
Taking the $L_p$ average over $S$ in \eqref{eq:smoothed-telescope} gives
\[
 \frac1m\left(\E_{S,x,\eps}
 |T_{S^c}f(x+4m\eps_S)-T_{S^c}f(x)|^p\right)^{1/p}
 \leq\left(\E_{S,x}\norm{P_Sh_x}_p^p\right)^{1/p}.
\]
For each fixed $x$, the function $h_x$ is mean zero. Apply \Cref{thm:main-chaos}, raise the resulting inequality to the $p$th power, and average in $x$ to obtain
\[
 \left(\E_{S,x}\norm{P_Sh_x}_p^p\right)^{1/p}
 \leq C\betaP\left[
 t\sum_{j=1}^n\E_x\norm{\partial_jh_x}_p^p
 +t^{p/2}\E_x\norm{h_x}_p^p\right]^{1/p}.
\]
By \Cref{lem:hx-comparison}, the quantity in brackets is at most
\[
 8^p\left[t\sum_{j=1}^n\E_x|f(x+e_j)-f(x)|^p
 +t^{p/2}\E_{x,\eps}|f(x+\eps)-f(x)|^p\right].
\]
Combining these estimates and absorbing the factor $8$ into the universal constant proves \eqref{eq:smoothed-metric}.
\end{proof}

\begin{proof}[Proof of \Cref{thm:main-metric} for scalar-valued functions]
Set
\[
 A(f)=t\sum_{j=1}^n\E_x|f(x+e_j)-f(x)|^p,
 \qquad
 B(f)=\E_{x,\eps}|f(x+\eps)-f(x)|^p.
\]
For each fixed $S$, insert the smoothed function at both endpoints and apply Minkowski's inequality:
\begin{align*}
 \norm{f(x+4m\eps_S)-f(x)}_{L_p(x,\eps)}
 &\leq\norm{T_{S^c}f(x+4m\eps_S)-T_{S^c}f(x)}_{L_p(x,\eps)}\\
 &\quad+\norm{f(x+4m\eps_S)-T_{S^c}f(x+4m\eps_S)}_{L_p(x,\eps)}\\
 &\quad+\norm{f(x)-T_{S^c}f(x)}_{L_p(x,\eps)}.
\end{align*}
The last two terms are equal by translation invariance in $x$, and \Cref{lem:smoothing} bounds each by $2B(f)^{1/p}$. Taking the $L_p$ average in $S$ and applying \Cref{prop:smoothed-metric} yields
\begin{equation}
 \left(\E_{S,x,\eps}|f(x+4m\eps_S)-f(x)|^p\right)^{1/p}
 \leq C_2\betaP m[A(f)+t^{p/2}B(f)]^{1/p}+4B(f)^{1/p}.
\label{eq:metric-error}
\end{equation}
The scaling assumption $m\geq t^{-1/2}$ is equivalent to $m\sqrt t\geq1$. Since $\betaP\geq1$, we have $4B(f)^{1/p}\leq4\betaP m\sqrt t\,B(f)^{1/p}=4\betaP m[t^{p/2}B(f)]^{1/p}$. This term is absorbed into the right-hand side of \eqref{eq:metric-error}, proving \eqref{eq:main-metric} for scalar functions. The scalar chaos estimate has already been proved over both fields, and the remaining transference argument uses only absolute values, Jensen's inequality, Minkowski's inequality, and translation invariance. Hence the conclusion holds over both $\R$ and $\C$.
\end{proof}

\begin{proof}[Completion of the proof of \Cref{thm:main-metric}]
Let $f:G\to L_p(M,\nu)$. Because $G$ is finite, choose representatives $F(x,z)$ for all $x\in G$ on a common conull subset of $M$. For almost every $z$, apply the scalar metric inequality to $x\mapsto F(x,z)$ and raise it to the $p$th power. Integration in $z$ and Tonelli's theorem turn the left-hand side into $\E_{S,x,\eps}\|f(x+4m\eps_S)-f(x)\|_{L_p(M)}^p$ and the two terms on the right into precisely the two Bochner-valued quantities in \eqref{eq:main-metric}. Taking $p$th roots completes the proof.
\end{proof}

\begin{remark}\label{rem:scaling}
The condition $m\geq\sqrt{n/k}$ is not used in the chaos estimate or in the smoothed long-jump estimate. It enters only when the unsmoothing error in \eqref{eq:metric-error} is absorbed by the full-sign increment term.
\end{remark}

\section{Optimality and consequences}\label{sec:optimality}

\begin{proposition}[Degree one forces $p/\log p$]\label{prop:degree-one-lower}
There is a universal $c>0$ such that $\cX_p\geq cp/\log p$ for every $p\geq2$.
\end{proposition}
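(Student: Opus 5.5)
The plan is to test the defining inequality \eqref{eq:def-X-constant} on first-degree chaos supported on a block of coordinates. The block is chosen so that a random $k$-subset typically meets it in very few coordinates, but occasionally meets it in about $p/\log p$ of them. The heavy Poisson tail of this overlap is what produces the factor $p/\log p$. Concretely, fix $p\geq2$ (for $p$ in a bounded range the claim is trivial, since $\cX_p\geq c'>0$ follows from any nonzero example). Choose an integer $b\geq p^2$, let $n$ be a large multiple of $b$, and choose $k$ so that $\lambda:=kb/n$ is approximately $1/p$. Set
\[
 h(\eps)=\sum_{j\in B}\eps_j,\qquad B=[b],
\]
so that $P_Sh=\sum_{j\in S\cap B}\eps_j$, $\partial_jh=2\eps_j\1_{\{j\in B\}}$, and $h$ is mean zero.

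\emph{Right-hand side.} The first term is $t\sum_j\|\partial_jh\|_p^p=2^p\,tb=2^p\lambda\leq2^p$. For the second term, Khintchine's inequality with its $\sqrt p$ constant gives $\|h\|_p\leq C\sqrt{pb}$. Hence $t^{1/2}\|h\|_p\leq C\sqrt{p\,tb}=C\sqrt{p\lambda}\leq C$. The whole bracket in \eqref{eq:def-X-constant} is therefore at most $C^p$, so its $p$th root is bounded by a universal constant.

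\emph{Left-hand side.} Let $N=|S\cap B|$, which has a hypergeometric law with mean $\lambda$. For a fixed integer $N_0\leq b$, restrict to the event $\{N=N_0\}$ and bound $\E|\sum_{r\leq N_0}\eps_r|^p\geq2^{1-N_0}N_0^p$ by using the two all-equal sign patterns. This gives
\[
 \E_S\|P_Sh\|_p^p\geq\Pp(N=N_0)\,2^{1-N_0}N_0^p .
\]
Take $N_0=\lfloor p/\log p\rfloor$. Since $n\gg b\geq p^2$, the hypergeometric probability is comparable to the Poisson value, namely $\Pp(N=N_0)\geq c^{N_0}\lambda^{N_0}/N_0!$. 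Then $\lambda^{N_0}\geq(2p)^{-N_0}\geq e^{-2p}$ and $N_0!\leq N_0^{N_0}\leq e^{p}$, because $N_0\log N_0\leq p$. Together with $2^{-N_0}\geq e^{-p}$, this yields
\[
 \bigl(\E_S\|P_Sh\|_p^p\bigr)^{1/p}\geq c\,N_0\geq c'\,p/\log p .
\]
Dividing by the bounded right-hand side gives $\cX_p\geq c\,p/\log p$.

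The main obstacle is the quantitative hypergeometric-versus-Poisson lower bound for $\Pp(N=N_0)$, uniformly in $p$. I would handle it directly with
\[
 \Pp(N=N_0)=\binom b{N_0}\binom{n-b}{k-N_0}\Big/\binom nk ,
\]
using $\binom b{N_0}\geq(b/N_0)^{N_0}$ together with ratio estimates for the remaining binomial coefficients. These estimates are valid once $n/b$ and $b/N_0$ are large. The resulting loss is at most $c^{N_0}$, which is harmless after taking $p$th roots. A secondary point is the integrality of $k$ and $\lambda$, which only costs constant factors.
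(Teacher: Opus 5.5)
Your proof is correct, and it takes a genuinely different route from the paper's. The paper also restricts to degree-one chaos, but it then argues by citation. The right-hand side $R_1$ of the specialized inequality satisfies $R_0\le R_1\le 2R_0$, where $R_0$ is the normalization of the fixed-cardinality Johnson--Schechtman--Zinn inequality in Naor's Eq.~(9). The known sharpness of that inequality is then imported as a black box. Because that sharpness statement is asymptotic, the paper adds the example $n=k=1$, $h=\eps$ to cover bounded $p$.

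You instead construct the extremal configuration directly. With $h=\sum_{j\in B}\eps_j$ and $\lambda=kb/n\approx1/p$, both terms on the right are $O(1)$: the first is $2^p\lambda$, and by Khintchine $\sqrt t\|h\|_p\lesssim\sqrt{p\lambda}$. Meanwhile the Poisson-like event $|S\cap B|=N_0\approx p/\log p$ forces the left side to be $\gtrsim N_0$. This is exactly the mechanism behind the JSZ lower bound. In effect you reprove the cited fact in the normalization needed, uniformly in $p\ge2$, with no separate bounded-range step.

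The one nontrivial input, the hypergeometric bound, is elementary if you count ordered draws:
$\Pp(N=N_0)=\binom k{N_0}\,b^{\underline{N_0}}\,(n-b)^{\underline{k-N_0}}/n^{\underline k}\ge e^{-4\lambda}(\lambda/4)^{N_0}/N_0!$
whenever $k\ge2N_0$, $b\ge2N_0$ and $n\ge4b$. So $b\ge p^2$ is more than you need. You should also state the condition $k\ge2N_0$ explicitly; it holds once $n$ is large with $\lambda$ fixed.

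The paper's argument is shorter and makes explicit that the bound is inherited from linear Rosenthal/JSZ theory. Yours is self-contained and exhibits the extremizer.
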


\begin{proof}
Let $h(\eps)=\sum_{j=1}^na_j\eps_j$ with real coefficients. Then $\E h=0$, $P_Sh=\sum_{j\in S}a_j\eps_j$, and $\|\partial_jh\|_p=2|a_j|$. Hence \eqref{eq:def-X-constant} specializes to
\begin{equation}
 \left(\E_{S,\eps}\abs{\sum_{j\in S}a_j\eps_j}^p\right)^{1/p}
 \leq\cX_p\left[
 2^pt\sum_{j=1}^n|a_j|^p
 +t^{p/2}\E_\eps\abs{\sum_{j=1}^na_j\eps_j}^p
 \right]^{1/p}.
\label{eq:degree-one-specialization}
\end{equation}
Let
\begin{align*}
 R_0&=\left[t\sum_{j=1}^n|a_j|^p
 +t^{p/2}\E_\eps\abs{\sum_{j=1}^na_j\eps_j}^p\right]^{1/p},\\
 R_1&=\left[2^pt\sum_{j=1}^n|a_j|^p
 +t^{p/2}\E_\eps\abs{\sum_{j=1}^na_j\eps_j}^p\right]^{1/p}.
\end{align*}
Then $R_0\leq R_1\leq2R_0$. The sharp Johnson--Schechtman--Zinn inequality, in the fixed-cardinality form recorded in \cite[Eq.~(9)]{Naor2016}, asserts that the least constant in the inequality with $R_0$ on the right has order $p/\log p$ \cite{JMST1979,JSZ1985}. Since $R_1\leq2R_0$, the least constant with $R_1$ on the right is at least one half of the least constant with $R_0$ on the right. This proves $\cX_p\gtrsim p/\log p$ in the asymptotic range covered by the sharpness assertion. To make the uniform statement for all $p\geq2$ explicit, take $n=k=1$ and $h(\eps)=\eps$. Then \eqref{eq:def-X-constant} gives $1\leq\cX_p(2^p+1)^{1/p}$, whence $\cX_p\geq(2^p+1)^{-1/p}\geq1/\sqrt5$. Since $p/\log p$ is bounded on every bounded subinterval of $[2,\infty)$, decreasing the universal constant completes the proof for all $p\geq2$.
\end{proof}

\begin{proposition}[The metric lower bound]\label{prop:metric-lower}
There is a universal $c>0$ such that $\cM_p\geq cp/\log p$ for every $p\geq2$.
\end{proposition}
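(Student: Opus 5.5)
We reduce the metric lower bound to the cube lower bound of \Cref{prop:degree-one-lower} by testing \eqref{eq:def-M-constant} on a linear function of the torus. We embed $\mathbb Z_{8m}$ into the circle through a character. Fix real coefficients $a_1,\ldots,a_n$ and set
\[
 f(x)=\sum_{j=1}^n a_j\,e^{2\pi i x_j/(8m)}\,u_j,
\]
where $u_j$ are suitable elements of $L_p(M,\nu)$. The cleanest choice is complex scalar-valued: take $M$ a point and
\[
 f(x)=\sum_j a_j\,\omega^{x_j}\xi_j ,
\]
with $\omega=e^{2\pi i/(8m)}$, where we absorb independent random signs by enlarging $M$. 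A simpler alternative is to take $M=\{-1,1\}^n$ and
\[
 f(x)(\delta)=\sum_j a_j\delta_j\,\omega^{x_j}.
\]
In the long jump $x_j\mapsto x_j+4m\eps_j$ the phase becomes $\omega^{4m\eps_j}=-1$, so
\[
 f(x+4m\eps_S)-f(x)=-2\sum_{j\in S}a_j\delta_j\omega^{x_j}.
\]
By Khintchine--Kahane contraction, or simply rotation invariance of the joint law of $(\delta_j\omega^{x_j})$ after averaging in $x$, the left side of \eqref{eq:def-M-constant} becomes
\[
 2\Bigl(\E_S\E\Bigl|\sum_{j\in S}a_j\theta_j\Bigr|^p\Bigr)^{1/p},
\]
with $\theta_j$ i.i.d.\ uniform on the $8m$-th roots of unity.

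Next we compute the right side. A unit step changes only one term, so $\E_x\|f(x+e_j)-f(x)\|^p=|1-\omega|^p|a_j|^p\leq (\pi/(4m))^p|a_j|^p$; the factor $m$ in front cancels this $1/m$. For the full-sign step, $\|f(x+\eps)-f(x)\|_p^p$ equals $|1-\omega|^p\,\E|\sum_j a_j\theta_j'|^p$, with $\theta_j'$ again i.i.d.\ symmetric unimodular. Multiplying by $m$ again gives a bounded factor. After dividing out universal constants, \eqref{eq:def-M-constant} therefore yields
\[
 \Bigl(\E_S\E\Bigl|\sum_{j\in S}a_j\theta_j\Bigr|^p\Bigr)^{1/p}\lesssim \cM_p\Bigl[t\sum_j|a_j|^p+t^{p/2}\E\Bigl|\sum_ja_j\theta_j\Bigr|^p\Bigr]^{1/p}.
\]

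The main obstacle is converting the $\theta_j$ back to Rademacher signs. We contract $\theta_j=\eps_j\cdot|\mathrm{Re}\,\theta_j|$-type decompositions: taking the real part on the left (which only decreases the norm) gives $\sum a_j\eps_j r_j$ with $r_j=|\cos\arg\theta_j|$ independent of $\eps$. Conditioning on $r$, Jensen shows that $\E r_j\geq c_0>0$ bounds it below by $c_0$ times the Rademacher sum. On the right, contraction gives $\E|\sum a_j\theta_j|^p\leq 2^p\E|\sum a_j\eps_j|^p$, by splitting into real and imaginary parts and applying Kahane's contraction principle, since $|\cos|,|\sin|\leq1$. Hence $\cM_p$ dominates, up to universal factors, the least constant in the Rademacher inequality with $R_0$ on the right. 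That constant is of order $p/\log p$ by the Johnson--Schechtman--Zinn sharpness \cite[Eq.~(9)]{Naor2016} used in \Cref{prop:degree-one-lower}. One must ensure the extremal $(n,k)$ there can be realized with some $m\geq\sqrt{n/k}$, which is free since $m$ is arbitrary. Finally, as in \Cref{prop:degree-one-lower}, a trivial example with $n=k=1$ gives $\cM_p\gtrsim1$, which handles bounded $p$.
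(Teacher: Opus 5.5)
Your argument is correct, but it follows a different route from the paper.

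\textbf{The paper's route.} In the asymptotic range the paper constructs no test function. It checks that \eqref{eq:def-M-constant} has the normalization of \cite[Eq.~(4)]{Naor2016} and then invokes \cite[Remark~5]{Naor2016}, which refers to the linear obstruction of \cite{NaorSchechtman2016}. It adds the one-dimensional example $f(x)=e^{\pi ix/4}$ on $\mathbb Z_8$ only to cover bounded $p$.

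\textbf{Your route.} You make the linear obstruction explicit. Testing on the character sum $f(x)=\sum_ja_j\delta_j\omega^{x_j}$ does three things:
\begin{enumerate}
\item the long jump becomes exactly $-2\sum_{j\in S}a_j\delta_j\omega^{x_j}$;
\item each unit increment has norm $|1-\omega|\,|a_j|$;
\item the full-sign increment becomes $|1-\omega|$ times a sum of independent symmetric unimodular variables.
\end{enumerate}
Since $m|1-\omega|\leq\pi/4$, the real-part/Jensen lower bound and the contraction-principle upper bound then give $\cM_p\geq(4c_0/\pi)K_0$. Here $K_0$ is the least constant in the scalar Rademacher inequality with $R_0$ on the right.

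\textbf{What each approach buys.} Your reduction is self-contained, with explicit universal factors. It is uniform in $m$, so the constraint $m\geq\sqrt{n/k}$ costs nothing. It also holds for every $p\geq2$ at once. Because $K_0\geq2^{-1/p}$ trivially (take $n=k=1$, $a_1=1$), your separate bounded-range example is redundant. Your argument also shows that the metric lower bound rests on exactly the same Johnson--Schechtman--Zinn sharpness fact as \Cref{prop:degree-one-lower}. The paper's route is shorter, but it depends on matching normalizations with an external remark.

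\textbf{Two small corrections.}
\begin{enumerate}
\item The auxiliary signs $\delta_j$ and the space $M=\{-1,1\}^n$ are unnecessary. Averaging over $x$ already makes the $\omega^{x_j}$ independent and uniform on the $8m$-th roots of unity, so a one-point $M$ suffices.
\item In the lower bound for the left side, you should fix $\eps$ (and $S$) and apply Jensen in $r$, not condition on $r$. Conditioning on $r$ would require $r_j\geq c_0$ pointwise, which fails because $r_j$ can vanish.
\end{enumerate}
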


\begin{proof}
The inequality defining $\cM_p$ in \eqref{eq:def-M-constant} has the normalization of \cite[Eq.~(4)]{Naor2016}: the domain is $\mathbb Z_{8m}^n$, the long jump is $4m\eps_S$, the scaling assumption is $m\geq\sqrt{n/k}$, and the two terms on the right have the same powers of $k/n$. Naor's \cite[Remark~5]{Naor2016}, referring to the linear obstruction from \cite{NaorSchechtman2016}, yields the lower bound $\cM_p\gtrsim p/\log p$ in the asymptotic range. For completeness, a uniform positive lower bound in the remaining bounded range follows from a one-dimensional example. Take $n=k=m=1$, let $M$ be a one-point space, and define $f(x)=e^{\pi i x/4}$ on $\mathbb Z_8$. The long increment has modulus $2$, while both the unit increment and the full-sign increment have the constant modulus $a=|e^{\pi i/4}-1|$. Thus \eqref{eq:def-M-constant} implies $2\leq\cM_p2^{1/p}a$, and consequently $\cM_p\geq2^{1-1/p}/a$. Since $p/\log p$ is bounded on bounded intervals, a reduction of the universal constant proves the claim for every $p\geq2$.
\end{proof}

\begin{proof}[Proof of \Cref{thm:constant-asymptotics}]
The upper bounds follow from \Cref{thm:main-chaos,thm:main-metric}; the lower bounds are \Cref{prop:degree-one-lower,prop:metric-lower}.
\end{proof}

\begin{corollary}[No higher-order loss]\label{cor:no-higher-order-loss}
Let $\cX_p^{(d)}$ be defined as in \eqref{eq:def-X-constant}, but with $h$ restricted to Walsh polynomials of degree at most $d$. For every $d\in\N\cup\{\infty\}$ with $d\geq1$,
\[
 \cX_p^{(d)}\asympU\frac p{\log p},
 \qquad p\geq2,
\]
with universal comparison constants independent of $d$.
\end{corollary}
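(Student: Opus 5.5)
The corollary follows by sandwiching $\cX_p^{(d)}$ between the constants already controlled in \Cref{thm:constant-asymptotics} and \Cref{prop:degree-one-lower}. The key observation is monotonicity in $d$. For $1\leq d\leq d'\leq\infty$, the class of degree-$\leq d$ Walsh polynomials is contained in the class of degree-$\leq d'$ Walsh polynomials. Here $d=\infty$ means no restriction, since every function on $\Om$ is a Walsh polynomial of degree at most $n$. Because the defining inequality \eqref{eq:def-X-constant} must hold for every admissible $h$, we get
\[
 \cX_p^{(1)}\leq\cX_p^{(d)}\leq\cX_p^{(\infty)}=\cX_p .
\]
Each $\cX_p^{(d)}$ is understood as a least constant over the same quantifiers ($n,k,\K$), with $h$ mean zero of degree at most $d$.

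For the upper bound, invoke \Cref{thm:constant-asymptotics}, or directly \Cref{thm:main-chaos}. It gives $\cX_p^{(d)}\leq\cX_p\leq C p/\log p$ with $C$ universal. Since the theorem imposes no restriction on the Walsh degree of $h$, this bound is automatically independent of $d$.

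For the lower bound, note that the proof of \Cref{prop:degree-one-lower} uses only the test functions $h(\eps)=\sum_j a_j\eps_j$. These include the one-dimensional example $h(\eps)=\eps$ used for the bounded range of $p$. All of them are mean-zero Walsh polynomials of degree exactly one. The argument therefore proves the stronger statement $\cX_p^{(1)}\geq c\,p/\log p$ for all $p\geq2$, with the same universal $c$. Combined with monotonicity, this gives $\cX_p^{(d)}\geq c\,p/\log p$ for every $d\geq1$.

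There is no real obstacle. The only point needing care is to confirm that the lower-bound argument never leaves the degree-one class, and it does not. Consequently the comparison constants $c$ and $C$ are those of \Cref{thm:constant-asymptotics} and do not depend on $d$.
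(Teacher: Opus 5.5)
Your proof is correct and follows the paper's argument. The upper bound comes from \Cref{thm:main-chaos}, which places no restriction on the Walsh degree, and the lower bound comes from the degree-one test functions in \Cref{prop:degree-one-lower}, including the bounded-range example $h(\eps)=\eps$, so that argument actually yields $\cX_p^{(1)}\geq c\,p/\log p$; you make this explicit, while the paper leaves it implicit.
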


\begin{proof}
The upper bound follows from \Cref{thm:main-chaos} and is independent of the Walsh degree. The class of polynomials of degree at most $d$ contains every degree-one function when $d\geq1$, so \Cref{prop:degree-one-lower} gives the lower bound.
\end{proof}

\begin{corollary}[Separated statistics]\label{cor:separated-statistics}
Let $p\geq2$, $n\in\N$, $k\in[n]$, $t=k/n$, and let $S$ be uniform among the $k$-subsets of $[n]$. Then every $h\in L_p^0(\Om;\K)$, where $\K\in\{\R,\C\}$, satisfies
\[
 \left(\E_S\norm{P_Sh}_p^p\right)^{1/p}
 \lesssim t\norm{h}_p
 +\frac p{\log p}\left(t\sum_{j=1}^n\norm{D_jh}_p^p\right)^{1/p}
 +\frac p{\log p}\sqrt t\,\norm{h}_p.
\]
\end{corollary}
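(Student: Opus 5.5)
This corollary is essentially a restatement of \Cref{thm:refined-chaos}, so the plan is to read it off from \eqref{eq:refined-chaos} rather than rerun any analysis. Applied to $h\in L_p^0(\Om;\K)$ with $S$ uniform among the $k$-subsets, that theorem gives
\[
 \left(\E_S\norm{P_Sh}_p^p\right)^{1/p}
 \leq t\norm{h}_p+C\betaP\left(t\sum_{j=1}^n\norm{D_jh}_p^p\right)^{1/p}
 +\frac{C\pi}{2}\betaP\sqrt t\,\norm{h}_p .
\]
The only task is to check that every constant here is universal. This means it is independent of $p$, $n$, $k$ and the scalar field, so that $\lesssim$ may absorb $\max\{1,C,C\pi/2\}$.

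For the record, the three terms come from three separate places in the proof of \Cref{thm:refined-chaos}:
\begin{enumerate}
 \item The \emph{mean term} $t\norm h_p$ comes from the exact reconstruction $\sum_jg_j=h$ in \Cref{prop:factorization}, combined with the centering at $a_{\mathrm{av}}$ in \Cref{lem:fixed-cardinality}.
 \item The \emph{coordinate term} $\betaP(t\sum_j\norm{D_jh}_p^p)^{1/p}$ comes from the $p$-th moment part of the sharp Rosenthal inequality \eqref{eq:sharp-rosenthal}, together with \Cref{lem:coordinate-contraction}.
 \item The \emph{square-function term} $\betaP\sqrt t\,\norm h_p$ comes from the variance part of \eqref{eq:sharp-rosenthal}, together with \Cref{thm:inverse-gradient-square}.
\end{enumerate}

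The point worth stressing is that the mean term carries no factor $\betaP$. One should therefore not merge it into the third term, as was done in the proof of \Cref{thm:main-chaos}, if the separated form is wanted. Equally, the corollary as stated remains true after such merging, since $t\leq\sqrt t$ and $\betaP\geq1$.

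There is no real obstacle. The only care needed is to confirm that $C_0$ in \Cref{lem:fixed-cardinality} and $\pi/2$ in \Cref{thm:inverse-gradient-square} are independent of all parameters, which they are by their statements.
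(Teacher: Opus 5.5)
Your proposal is correct and is the paper's own argument: the corollary is read off directly from \eqref{eq:refined-chaos}, with the universal constants $C$ and $C\pi/2$ absorbed into $\lesssim$. Your side remark is also accurate. Because $t\le\sqrt t$ and $\betaP\ge1$, the separated form and the merged form used in the proof of \Cref{thm:main-chaos} are equivalent up to universal constants.
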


\begin{proof}
This is \Cref{thm:refined-chaos}, with universal constants suppressed.
\end{proof}

\begin{remark}[Scope of the vector-valued statement]\label{rem:vector-scope}
The extension to $L_p(M)$ uses the canonical isometric identification $L_p(\Om;L_p(M))\cong L_p(\Om\times M)$. The argument does not automatically extend to an arbitrary Banach target. In particular, the pointwise Bessel estimate in \Cref{prop:reverse-poincare} extracts an $\ell_2$ family of scalar coefficients before the outer $L_p$ norm is taken, and there is no general Banach-valued substitute compatible with the required Bochner structure.
\end{remark}

\section{Further remarks and comparison with previous work}\label{sec:context}

\subsection{The inverse-gradient estimate}

With our normalization, set $R_j=D_j\Delta^{-1/2}$. Dimension-free estimates for these discrete Riesz transforms were developed by Lust-Piquard and in subsequent work \cite{LustPiquard1998,LustPiquard2004,BenEfraimLustPiquard2008,JungeMeiParcet2018}. The operator needed here is $D_j\Delta^{-1}$, which contains an additional inverse half-power. Estimate \eqref{eq:inverse-gradient-square} follows directly by integrating the reverse Poincar\'e inequality, without first estimating the Riesz vector.

The coefficient identity \eqref{eq:biased-coefficient} is a scalar specialization of the representation in \cite[Lemma~2.1]{IvanisviliVanHandelVolberg2020}; related biased-sign representations also occur in \cite{ChenDai2026}. The pointwise estimate \eqref{eq:reverse-poincare}, including its interpretation as a reverse Poincar\'e inequality, is explicitly used in \cite{IvanisviliXieZhang2026}. Thus neither the coefficient identity nor the Bessel extraction is claimed as new. The argument here uses that standard estimate in the inverse-gradient decomposition required by the fixed-cardinality restriction problem.

\subsection{Losses from separate Riesz estimates}

Write $\mathbf Ru=(R_ju)_{j=1}^n$. The factorization $(D_j\Delta^{-1}h)_{j=1}^n=\mathbf R\Delta^{-1/2}h$ can be combined with the separate estimates $\|\mathbf Ru\|_{L_p(\ell_2)}\lesssim p\|u\|_p$ and $\|\Delta^{-1/2}\|_{L_p^0\to L_p^0}\lesssim\sqrt{\log p}$, valid for $p\geq2$; see \cite{DomelevoIvanisviliPetermichlVolberg2026} and \cite[Lemma~10]{Naor2016}. This direct composition gives only
\[
 \norm{(D_j\Delta^{-1}h)_{j=1}^n}_{L_p(\ell_2)}
 \lesssim p\sqrt{\log p}\,\norm{h}_p.
\]
Consequently, the square-function term in \Cref{lem:fixed-cardinality} would be bounded only by a universal multiple of $p^2(\log p)^{-1/2}\sqrt t\,\|h\|_p$, rather than by a quantity of order $(p/\log p)\sqrt t\,\|h\|_p$. This calculation shows that the naive procedure of estimating the two factors separately is quantitatively insufficient; it does not rule out a different argument exploiting cancellation in their composition. The direct reverse-Poincar\'e integration avoids both losses.

A coordinatewise estimate is also insufficient. The term produced by Minkowski is $\sqrt t\,\|(g_j)_j\|_{L_p(\ell_2)}\leq \sqrt t(\sum_j\|g_j\|_p^2)^{1/2}$, where $g_j=D_j\Delta^{-1}h$, and the right-hand side cannot be controlled dimension-freely by the derivative and norm statistics in \eqref{eq:refined-chaos}. We give an explicit example with $p=4$. Let $r\geq2$ be even, set $N=2^r$ and $n=N+r$, and identify $\Omega_n$ with $\Omega_N\times\Omega_r$. Enumerate $\Omega_r$ as $y^{(1)},\ldots,y^{(N)}$, let $J(y^{(i)})=i$, and define $h(x,y)=x_{J(y)}$. Then $\E h=0$ and $\|h\|_4=1$. For the $x_i$-coordinates,
\[
 D_{x_i}h=x_i\1_{\{y=y^{(i)}\}},
 \qquad \sum_{i=1}^N\norm{D_{x_i}h}_4^4=1.
\]
Flipping a $y$-coordinate replaces the selected $x$-coordinate by a distinct one. If $y^{(\ell)}$ denotes the vector obtained from $y$ by changing the sign of its $\ell$th coordinate, then, for every $\ell\in[r]$, $D_{y_\ell}h=(x_{J(y)}-x_{J(y^{(\ell)})})/2$, so $\|D_{y_\ell}h\|_4^4=1/2$. Consequently,
\[
 \sum_{j=1}^{n}\norm{D_jh}_4^4=1+\frac r2.
\]
To estimate the coordinatewise term, use the Walsh identity $\1_{\{y=y^{(i)}\}}=N^{-1}\sum_{B\subseteq[r]}W_B(y^{(i)})W_B(y)$. It gives
\[
 g_i(x,y)=D_{x_i}\Delta^{-1}h(x,y)
 =\frac{x_i}{N}\sum_{B\subseteq[r]}
 \frac{W_B(y^{(i)})W_B(y)}{1+|B|}.
\]
At $y=y^{(i)}$,
\[
 |g_i(x,y^{(i)})|
 =\frac1N\sum_{s=0}^r\binom rs\frac1{s+1}
 =\frac{2-N^{-1}}{r+1},
\]
where the last identity follows by integrating $(1+u)^r$ over $u\in[0,1]$. Since the fiber $\{y=y^{(i)}\}$ has normalized measure $N^{-1}$ and $|x_i|=1$, it follows that $\|g_i\|_4\geq N^{-1/4}(2-N^{-1})/(r+1)$. Consequently,
\[
 \left(\sum_{i=1}^N\norm{g_i}_4^2\right)^{1/2}
 \geq \frac{2-N^{-1}}{r+1}N^{1/4}.
\]
Because $r$ is even, $n$ is even; choose $k=n/2$, so $t=1/2$. The coordinatewise bound is then at least a universal multiple of $2^{r/4}/r$, whereas
\[
 \left(t\sum_{j=1}^{n}\norm{D_jh}_4^4\right)^{1/4}
 +\sqrt t\,\norm h_4
 \lesssim r^{1/4}+1.
\]
Thus no dimension-free estimate of the required form can follow from the coordinatewise Minkowski bound. The pointwise square function in \eqref{eq:inverse-gradient-square} is essential to the argument.

\subsection{Relation to other \texorpdfstring{$X_p$}{Xp} and cube estimates}

Naor's arbitrary-chaos estimate and metric transference appear in \cite{Naor2016}. Trigonometric and group-algebraic extensions were developed in \cite{CanoCondeParcet2024,CanoCondeParcet2025}. Dimension-free estimates for low-degree and tail-space functions on the cube are studied in \cite{DomelevoEtAl2025}. By contrast, \cite{Volberg2022,IvanisviliVolberg2022} concern Pisier- and Riesz-type inequalities for Banach targets and $K$-convexity; they are related to the operator-theoretic background but are not low-degree or tail-space results. Earlier square-function and Pisier-type developments include \cite{HytonenNaor2013,IvanisviliNazarovVolberg2018}.

The sharp $p/\log p$ dependence in \Cref{thm:main-chaos} comes from the fixed-cardinality Rosenthal comparison; the inverse-gradient estimate contributes no further dependence on $p$. To the best of the author's knowledge, the exact restriction factorization has not previously been combined with the sharp-order fixed-cardinality comparison and the inverse-gradient reverse-Poincar\'e bound to obtain the two asymptotic conclusions stated here. This statement is limited to publicly available sources and does not exclude unpublished or simultaneous work.

\subsection*{Disclosure and acknowledgments}

An OpenAI language model was used to search public literature, test finite-dimensional identities, identify possible proof gaps, and assist in preparing the manuscript.

\end{document}